\documentclass[reqno,12pt]{amsart}
\usepackage{amsmath, latexsym, amsfonts, amssymb, amsthm, amscd}

\usepackage{color}
\newenvironment{DIFnomarkup}{}{} % see man latexdiff

\setlength{\oddsidemargin}{5mm}
\setlength{\evensidemargin}{5mm}
\setlength{\textwidth}{150mm}
\setlength{\headheight}{0mm}
\setlength{\headsep}{12mm}
\setlength{\topmargin}{0mm}
\setlength{\textheight}{220mm}
\setcounter{secnumdepth}{2}

\frenchspacing
\numberwithin{equation}{section}

\newtheorem{theorem}{Theorem}[section]
\newtheorem{lemma}[theorem]{Lemma}

\newtheorem{cor}[theorem]{Corollary}

\newtheorem{remark}[theorem]{Remark}

\newfont{\indic}{bbmss12}
\def\un#1{\hbox{{\indic 1}$_{#1}$}}

\newcommand{\eps}{\epsilon}

            % \gg already exists...

\newcommand{\gep}{\varepsilon}       % \ge already exists...

\newcommand{\E}{{\ensuremath{\mathbb E}} }

\renewcommand{\P}{{\ensuremath{\mathbb P}} }

\newcommand{\R}{{\ensuremath{\mathbb R}}}

\title[Hitting properties and uniqueness for self-similar SDE]{Hitting properties and non-uniqueness for SDE driven by stable processes}

\author{Julien Berestycki}
\thanks{}
\address{Laboratoire de Probabilit\'es et Mod\'eles Al\'eatoires Universit\'e Paris 6, 4 place Jussieu, 75252 Paris Cedex 05, France}
\email{$\{$julien.berestycki, leif.doering, lorenzo.zambotti$\}$@upmc.fr}

\author{Leif D\"oring}
\thanks{LD is supported by the Foundation Science Mat\'ematiques de Paris}
%\address{Laboratoire de Probabilit\'es et Mod\'eles Al\'eatoires Universit\'e Paris 6, 4 place Jussieu, 75252 Paris Cedex 05, France}
%\email{leif.doering@upmc.fr}

\author{Leonid Mytnik}
\thanks{LM is partly supported by the Israel Science Foundation}
\address{Faculty of Industrial Engineering and Management Technion Israel Institute of Technology, Haifa 32000, Israel}
\email{leonid@ie.technion.ac.il}

\author{Lorenzo Zambotti}
\thanks{JB, LM and LZ thanks the Isaac Newton Institute for Mathematical Sciences, Cambridge, for their invitation during which part of this work was produced.}
%\address{Laboratoire de Probabilit\'es et Mod\'eles Al\'eatoires Universit\'e Paris 6, 4 place Jussieu, 75252 Paris Cedex 05, France}
%\email{lorenzo.zambotti@upmc.fr}

\subjclass[2000]{Primary 60J80; Secondary 60G18}
\keywords{Continuous state branching processes, Immigration, Self-similarity, Jump-diffusion}

%\date{}

\begin{document}

\begin{abstract}
We study a class of self-similar jump type SDEs driven by H\"older-continuous drift and noise coefficients. Using the Lamperti transformation for positive self-similar Markov processes we obtain a necessary and sufficient condition for almost sure extinction in finite time. We then show  that pathwise uniqueness holds in a restricted sense, namely among solutions spending a Lebesgue-negligible amount of time at 0. A direct power transformation plays a key role.
\end{abstract}
\maketitle

\section{Introduction and Results}

In recent years there has been considerable interest in proving existence and especially uniqueness of solutions to SDEs driven by $\alpha$-stable L\'evy processes with H\"older continuous coefficients. In \cite{FL, LM, Fournier} pathwise uniqueness for equations with H\"older continuous noise coefficients was obtained in the spirit of the classical Yamada-Watanabe result for SDEs driven by Brownian motion. On the other hand when the noise is additive (i.e. the noise coefficient is constant) and it is the drift coefficient which is supposed to be H\"older, Priola \cite{Priola} extended the known results for SDEs driven by Brownian motion to SDEs driven by stable L\'evy processes. 

In the present work we want to focus on a family of SDEs which interpolate  between the two classes of problems described above since both the drift and the noise coefficients are chosen H\"older-continuous.
We study existence and uniqueness (or lack of uniqueness) of non-negative solutions $(Z_t)_{t\geq 0}$ to the stochastic differential equation (SDE) of jump type
	\begin{align}\label{2b}
		Z_t=Z_0 + \int_0^t Z_{s-} ^{\beta} \,dL_s+\theta \int_0^t Z_s^{\eta}\,ds, \qquad t\geq 0,
	\end{align}
where $\beta,\eta \in [0,1),\theta \ge 0$ and $(L_t)_{t\geq 0}$ is a spectrally positive $\alpha$-stable $\alpha\in (1,2)$  L\'evy process with Laplace exponent 
	\begin{align*}%\label{nu_alpha}
	\log \E[e^{-\lambda L_1}] = \lambda^\alpha = \int_{(0,\infty)} \left( e^{-\lambda x} - 1+\lambda x \right) \, \frac{\alpha(\alpha-1)}{\Gamma(2-\alpha)} \, x^{-1-\alpha} \, dx, \qquad \lambda\geq 0.
	\end{align*}
%\XXX{ do we need this : Equivalently, we can say that $L$ has 0 drift, no Brownian component and L\'evy measure $c_\alpha x^{-1-\alpha} \,  \un{]0,+\infty[}(x) \, dx$ with $c_\alpha=\frac{\alpha(\alpha-1)}{\Gamma(2-\alpha)}$.} 
Observe that the drift is non-locally Lipschitz precisely around a point where the noise coefficients is degenerate (i.e. equal to 0). Therefore, it is perhaps not so surprising that uniqueness might fail if solutions hit zero. One of our main results is that, in a certain regime, pathwise uniqueness indeed fails, and we can explicitly construct two different solutions (which are moreover both strong). 
\smallskip

It turns out that if we chose $\eta$ properly as a function of $\alpha$ and $\beta$ the solutions of (\ref{2b}) trapped in 0  will be self-similar. This, in fact, is a crucial ingredient for our analysis and we will henceforth assume 	
	\begin{equation}\label{ass}
	\theta\geq 0, \quad \alpha\in\, (1,2),\quad \beta\in [1-1/\alpha,1), \quad \eta=1-\alpha(1-\beta)\in[0,1).
	\end{equation}	

Although this choice of parameters might appear arbitrary at first sight, covers in fact many important special cases.
\begin{itemize}	
\item[-] Solutions to (\ref{2b}) with $\theta=0$ and $\beta=1/\alpha$ are called continuous state branching processes with stable branching mechanism. If $\theta>0$ and still $\beta=1/\alpha$, the additional drift can be interpreted as a state-dependent immigration to the system and was studied for more general immigration mechanisms in Chapter 10 of Li \cite{Li}. 
\item[-] In the forthcoming article Berestycki et al. \cite{BDMZ} the authors use a spatial version of the SDE (\ref{2b}) with $\beta=1/\alpha$ and $\eta = 2-\alpha$ to study generalized Fleming-Viot superprocesses with mutation. Since the problem seems to be of independent interest, questions of existence and uniqueness of solutions to (\ref{2b}) are studied here separately.

\item[-]  If $\beta=1-1/\alpha$, then $\eta=0$, and uniqueness follows  for any $\theta\geq 0$ by Li and Mytnik~\cite{LM}. 
\end{itemize}
The case $\beta=1$ is not covered by the range of parameters allowed by (\ref{ass}). Nevertheless, it is easily seen that then $\eta=1$ and for any parameter $\theta\geq 0$ the SDE (\ref{2b}) is a linear equation for which pathwise uniqueness is a simple consequence of the Lipschitz property of the coefficients.

\smallskip

	%with Laplace exponent 
	%\begin{align}\label{nu_alpha}
	%\log \E[e^{-\lambda L_1}] = \lambda^\alpha = \int_{(0,\infty)} \left( e^{-\lambda x} - 1+\lambda x \right) \, \frac{\alpha(\alpha-1)}{\Gamma(2-\alpha)} \, x^{-1-\alpha} \, dx, \qquad \lambda\geq 0.
	%\end{align}
%	Equivalently, we can say that $L$ has 0 drift, no Brownian component and L\'evy measure $c_\alpha x^{-1-\alpha} \,  \un{]0,+\infty[}(x) \, dx$ with $c_\alpha=\frac{\alpha(\alpha-1)}{\Gamma(2-\alpha)}$.

% 	\smallskip
%Equation \eqref{2b} lies in between the two classes of problems described above: the drift and noise coefficients are both H\"older-continuous, and the drift is non-locally Lipschitz precisely around a point where the noise coefficients is degenerate (i.e. equal to 0). Therefore, it is perhaps not so surprising that uniqueness might fail if solutions hit zero. One of our main results is that pathwise uniqueness indeed fails, and we can explicitly construct two different solutions (which are moreover both strong).
%\smallskip

Before stating the results let us fix some notation. We suppose that $(L_t)_{t\geq 0}$ is adapted to a stochastic basis $(\Omega,\mathcal{G},(\mathcal{G}_t)_{t\geq 0},\P)$ satisfying the usual conditions. A $(\mathcal G_t)_{t\geq 0}$-adapted stochastic process $(Z_t)_{t\geq 0}$ with almost surely {\it c\`adl\`ag} sample paths solving \eqref{2b} a.s. is said to be a {\it solution to Equation \eqref{2b}}. If a solution is adapted to  the augmented filtration of $(L_t)_{t\geq 0}$ then it is said to be a {\it strong solution to Equation \eqref{2b}}. We say that pathwise uniqueness holds for the SDE \eqref{2b} if for any two solutions $Z^1, Z^2$ defined on $\Omega$ we have $\P(Z^{1}_t = Z^{2}_t, \, \forall\;t\geq 0)=1$.
 \smallskip

 A first simple observation is that strong existence and pathwise uniqueness of non-negative solutions hold for the SDE (\ref{2b}) before
 $$T_0:=\inf\{t\geq 0: Z_t=0\},$$ 
 the first hitting time of $0$; indeed, the coefficients are Lipschitz continuous on $(\gep,\infty)$  for all $\gep>0$. In order to understand why uniqueness might fail, we first explain when the event $\{T_0<\infty\}$ has positive probability, since otherwise nothing needs to be proved. Using that solutions of Equation (\ref{2b}) are self-similar for the appropriate choice of $\beta$ and $\eta$, Lamperti's transformation --which will be recalled below-- can be applied to prove the following result:
\begin{theorem}\label{T0} 
Let $\alpha\in (1,2)$, and $\beta, \eta$ are chosen as in (\ref{ass}), then $T_0<\infty$ almost surely if and only if  $0\leq \theta< \Gamma(\alpha)$ and  $T_0=\infty$ almost surely if and only if $ \theta\geq \Gamma(\alpha)$.
\end{theorem}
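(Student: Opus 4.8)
The plan is to use the self-similarity forced by \eqref{ass} together with Lamperti's representation of positive self-similar Markov processes: this converts the finiteness of $T_0$ into the question of whether an explicit spectrally positive L\'evy process drifts to $-\infty$, and that in turn reduces to evaluating one integral. First I would record that, since the coefficients are locally Lipschitz away from $0$, the solution $(Z_t)_{t<T_0}$ killed at $0$ is uniquely determined and is a positive self-similar Markov process of index $\gamma:=\alpha(1-\beta)\in(0,1]$. Indeed, if $(Z_t)$ solves \eqref{2b} from $z>0$, then $\widetilde Z_t:=c^{-1}Z_{c^{\gamma}t}$ solves \eqref{2b} from $z/c$ driven by $\widetilde L_t:=c^{-\gamma/\alpha}L_{c^{\gamma}t}$, which is again a spectrally positive $\alpha$-stable process with the same Laplace exponent; this uses the $\tfrac1\alpha$-self-similarity of $L$ together with the identities $\beta+\gamma/\alpha=1$ and $\eta+\gamma=1$, i.e.\ exactly \eqref{ass}. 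Pathwise uniqueness before $T_0$ then gives the scaling identity in law, hence self-similarity. Moreover $L$ has no negative jumps, so neither does $Z$, and therefore $Z$ can reach $0$ only by a continuous passage; in particular the value of $\P(T_0<\infty)$ does not depend on how a solution is extended past $T_0$. By scaling we may assume $Z_0=1$ (the case of random $Z_0>0$ follows by conditioning, and $Z_0=0$ gives $T_0=0$ trivially).

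Next I would apply Lamperti's theorem: $\xi_s:=\log Z_{\sigma_s}$, where $(\sigma_s)$ is the inverse of the additive functional $t\mapsto\int_0^t Z_u^{-\gamma}\,du$, is a L\'evy process with infinite lifetime (no killing, because $Z$ does not jump to $0$), and
\[
T_0=\int_0^{\infty}e^{\gamma\xi_s}\,ds .
\]
To identify $\xi$, compute the generator of $Z$,
\[
\mathcal A f(z)=\theta z^{\eta}f'(z)+c_\alpha\, z^{\alpha\beta}\!\int_0^{\infty}\!\big(f(z+w)-f(z)-wf'(z)\big)\,w^{-1-\alpha}\,dw,\qquad c_\alpha:=\frac{\alpha(\alpha-1)}{\Gamma(2-\alpha)},
\]
conjugate it by $z\mapsto\log z$ and apply the time change (which multiplies the generator of $\log Z$ by $e^{\gamma y}$, since the clock is the inverse of $\int_0^\cdot e^{-\gamma\log Z_u}\,du$); using $\eta-1=-\gamma$ and $\alpha\beta-\alpha=-\gamma$ one checks that the resulting operator is translation invariant, namely
\[
\mathcal L g(y)=\theta\, g'(y)+\int_0^{\infty}\!\big(g(y+u)-g(y)-(e^{u}-1)g'(y)\big)\,\pi(du),\qquad \pi(du)=c_\alpha\,(e^{u}-1)^{-1-\alpha}e^{u}\,du .
\]
Thus $\xi$ is spectrally positive and $\pi$ has an exponentially decaying right tail (since $\alpha>1$), so $\E|\xi_1|<\infty$. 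Consequently $\xi$ drifts to $-\infty$ if and only if $\E[\xi_1]<0$; and $\int_0^{\infty}e^{\gamma\xi_s}\,ds<\infty$ a.s.\ precisely in that case (if $\E[\xi_1]<0$ the integrand decays exponentially; if $\E[\xi_1]\ge0$ then $\limsup_{s}\xi_s=+\infty$ a.s.\ — using the law of large numbers, and recurrence of a centred real L\'evy process when $\E[\xi_1]=0$ — and the integral diverges). Hence $T_0<\infty$ a.s.\ iff $\E[\xi_1]<0$, and $T_0=\infty$ a.s.\ iff $\E[\xi_1]\ge0$.

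It remains to compute $\E[\xi_1]=\mathcal L(\mathrm{id})(0)$:
\[
\E[\xi_1]=\theta+c_\alpha\int_0^{\infty}\!\big(u-(e^{u}-1)\big)(e^{u}-1)^{-1-\alpha}e^{u}\,du .
\]
The substitution $t=e^{u}-1$ turns the integral into $-\int_0^{\infty}\big(t-\log(1+t)\big)t^{-1-\alpha}\,dt$; an integration by parts (the boundary terms vanish because $1<\alpha<2$) reduces it to $-\tfrac1\alpha\int_0^{\infty}\tfrac{t^{1-\alpha}}{1+t}\,dt=-\tfrac1\alpha\,\Gamma(2-\alpha)\Gamma(\alpha-1)$, using the standard beta integral. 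Therefore
\[
\E[\xi_1]=\theta-c_\alpha\cdot\frac{\Gamma(2-\alpha)\Gamma(\alpha-1)}{\alpha}=\theta-(\alpha-1)\Gamma(\alpha-1)=\theta-\Gamma(\alpha).
\]
Combining with the previous step: $T_0<\infty$ a.s.\ iff $\theta<\Gamma(\alpha)$, and $T_0=\infty$ a.s.\ iff $\theta\ge\Gamma(\alpha)$, as claimed.

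There is no deep difficulty here — that is exactly why routing everything through Lamperti is the right move — but the care lies in the first two steps: making rigorous that the killed solution is a genuine positive self-similar Markov process, so that Lamperti's theorem applies and $\P(T_0<\infty)$ is unambiguous irrespective of the (possibly non-unique) continuation past $T_0$; and carrying out the Itô / time-change computation with the correct compensator, since for $\alpha\in(1,2)$ the jump part of \eqref{2b} is integrable only after compensation. The exponential-functional dichotomy and the evaluation of the integral in the last step are then routine.
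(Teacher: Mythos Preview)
Your proof is correct and follows essentially the same route as the paper: establish self-similarity of the killed solution, invoke Lamperti's representation and the equivalence $T_0<\infty$ a.s.\ $\Leftrightarrow$ $\xi$ drifts to $-\infty$, identify the L\'evy process $\xi$ (same L\'evy measure $c_\alpha e^u(e^u-1)^{-1-\alpha}\,du$), and check the sign of $\E[\xi_1]=\theta-\Gamma(\alpha)$. The only implementational differences are that the paper pins down $\xi$ by showing $e^{\xi}$ solves a linear SDE which, after the inverse Lamperti time-change, reproduces \eqref{2b} (then appeals to uniqueness), and it computes the full Laplace exponent $\psi(\lambda)=\lambda(\theta-\Gamma(\alpha-\lambda)/\Gamma(1-\lambda))$ on $[0,1)$ before reading off $\psi'(0+)$, whereas you go via generators and evaluate $\E[\xi_1]$ directly; both are fine, and your integration by parts is the same calculation specialised to $\lambda=0$.
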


To see how uniqueness might fail when (re)starting at zero let us suppose that  $Z$ is the unique solution up to $T_0$. If $\beta>1-1/\alpha$, one possible solution to the SDE (\ref{2b}) is always  the trivial solution
 $(\bar Z_t)_{t\geq 0} := (Z_{t\wedge T_0})_{t\geq 0}$, i.e. the solution trapped at zero after $T_0$. Hence, the existence of a non-trivial solution contradicts pathwise uniqueness in the classical sense. It does not if we weaken the set of possible 	solutions to the class $\mathcal S$ 
\begin{align*}
	\mathcal S:=\left\{(Z_t)_{t\geq 0}\,\Big \vert\,
	\text{ $Z\geq 0$ and} 
	\int_0^\infty  \un{\{Z_t=0\}}dt=0\quad \text{a.s.}\right\}
\end{align*}	 
which in particular rules out the trivial solution $\bar Z$. Of course it is not clear {\it a priori} whether there is a solution $Z\in\mathcal S$. Both the drift and the noise are null when solutions hit zero so that existence of strong solutions that leave zero is non-trivial.

\begin{theorem}\label{T1} Let $\alpha\in (1,2), \beta\in (1-1/\alpha,1)$ and suppose that $Z_0>0.$ Recall that $\eta =1-\alpha(1-\beta).$
\begin{itemize}
\item[A)] If $\theta>\frac{\Gamma(\alpha\beta)}{\Gamma(\eta)}$, then  there is a unique solution $Z\in \mathcal S$ to the SDE \eqref{2b}, which is moreover strong.
\item[B)] If $\theta\leq \frac{\Gamma(\alpha\beta)}{\Gamma(\eta)}$, then there is no solution $Z\in \mathcal S$ to the SDE \eqref{2b}.
\end{itemize}
\end{theorem}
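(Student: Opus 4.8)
The plan is to exploit the self-similarity in \eqref{ass} together with the Lamperti representation for positive self-similar Markov processes, but now applied \emph{after} a power change of variable. Concretely, for a solution $Z$ started at $Z_0>0$ and run up to $T_0$, I would set $X_t = Z_t^{\,1-\beta}$ (or a suitable constant multiple of this power), and compute via It\^o's formula for jump processes that $X$ solves, up to the time it hits $0$, an SDE whose noise term is again driven by an $\alpha$-stable process but with a \emph{constant} noise coefficient, and whose drift becomes of the form $c(\theta)\,X_t^{\,\gamma}\,dt$ for an explicit exponent $\gamma$ and an explicit affine function $c(\theta) = \theta/(1-\beta) - \kappa_{\alpha,\beta}$, where $\kappa_{\alpha,\beta}$ collects the It\^o correction coming from the jumps of $L$. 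The arithmetic of \eqref{ass} is arranged precisely so that $\gamma$ lands in the right range; this is the ``direct power transformation'' advertised in the abstract, and I expect the bookkeeping of the jump-It\^o correction — getting the constant exactly $\Gamma(\alpha\beta)/\Gamma(\eta)$ — to be the delicate computational core, relying on the explicit L\'evy measure $\tfrac{\alpha(\alpha-1)}{\Gamma(2-\alpha)} x^{-1-\alpha}\,dx$ and identities for the Gamma function.

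For part (A) I would then argue as follows. After the power transformation the process $X$ near $0$ behaves like a stable process plus a drift that is \emph{strictly positive and pushing away from $0$} exactly when $\theta > \Gamma(\alpha\beta)/\Gamma(\eta)$, i.e. when $c(\theta)>0$. Since the transformed noise coefficient is constant and the transformed drift is locally H\"older (hence the results on SDEs driven by stable processes with additive-type noise, e.g. Priola's and Li--Mytnik type uniqueness cited in the introduction, apply away from, and in fact across, $0$), one gets strong existence and pathwise uniqueness for $X$ in the class of nonnegative processes, and then for $Z$ in the class $\mathcal S$ by inverting the power map. The positivity of $c(\theta)$ is what guarantees that the canonical solution immediately re-enters $(0,\infty)$ after hitting $0$, so that it actually belongs to $\mathcal S$ and does not get trapped; combined with Theorem~\ref{T0} (which handles whether $0$ is reached at all) this pins down existence of a genuine $\mathcal S$-solution. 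Concatenating the pre-$T_0$ unique solution with this canonical excursion-type continuation, and using the strong Markov property at $T_0$, yields the unique strong solution in $\mathcal S$.

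For part (B), when $\theta \le \Gamma(\alpha\beta)/\Gamma(\eta)$ we have $c(\theta)\le 0$, so the transformed drift no longer pushes away from $0$; I would show that for \emph{any} nonnegative solution $X$, once it hits $0$ it must stay at $0$, whence $Z$ spends positive Lebesgue time at $0$ and cannot lie in $\mathcal S$. The mechanism is a comparison/occupation-time argument: a spectrally positive stable process has no negative jumps, so $X$ can only leave $0$ continuously from below is impossible, and the only way to leave $0$ upward with a nonpositive drift and a noise coefficient vanishing at $0$ is ruled out by an estimate showing $\int_0^\varepsilon \un{\{X_t>0\}}\,dt$ would have to be infinite near an exit — contradicting finiteness, or more directly by showing the drift is not integrable enough to produce an exit in the $c(\theta)\le 0$ regime. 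One then transfers this back: any $Z$ solving \eqref{2b} satisfies $\int_0^\infty \un{\{Z_t=0\}}\,dt>0$ on $\{T_0<\infty\}$, and by Theorem~\ref{T0} this event has positive probability whenever $\theta<\Gamma(\alpha)$ (and the boundary/critical sub-cases are checked by hand), so $\mathcal S$ is empty.

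The main obstacle I anticipate is twofold. First, making the power-transformation computation rigorous for a \emph{jump} SDE: the naive It\^o formula produces a compensated jump integral against $(Z_{s-}+\Delta)^{1-\beta} - Z_{s-}^{1-\beta}$, and one must check integrability, identify the limiting driving process as genuinely $\alpha$-stable (not merely a process with the right small-jump behaviour), and extract the exact Gamma-function constant — this is where a scaling/self-similarity argument à la Lamperti, rather than brute-force It\^o, is cleaner. Second, the behaviour \emph{at} the boundary point $0$: deciding whether the transformed process reflects or absorbs, and proving the dichotomy is governed precisely by the sign of $c(\theta)$, requires a careful local analysis near $0$ (an Engelbert--Schmidt / Feller-type boundary classification adapted to the stable setting), and the equality case $\theta = \Gamma(\alpha\beta)/\Gamma(\eta)$ will need a separate, more delicate argument than the strict inequalities.
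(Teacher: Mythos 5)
Your high-level plan — a power change of variable that turns the nonlinear drift-and-noise SDE into one amenable to the known stable-SDE uniqueness machinery, with the threshold $\Gamma(\alpha\beta)/\Gamma(\eta)$ emerging as an It\^o correction constant — is the same as the paper's, and the acknowledgement that extracting this constant is the ``delicate computational core'' is accurate. But the concrete shape you describe for the transformed equation is wrong, and this undermines the rest of the argument.

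The paper uses the power $z\mapsto z^{1-\eta}$, i.e.\ $z^{\alpha(1-\beta)}$, not $z^{1-\beta}$. The reason is that $F(z)=z^{1-\eta}$ with $F'(z)=(1-\eta)z^{-\eta}$ makes the \emph{drift}, not the noise, constant: $F'(Z)\cdot\theta Z^\eta = (1-\eta)\theta$, and after the jump compensation one lands on the constant $(1-\eta)\bigl(\theta-\Gamma(\alpha\beta)/\Gamma(\eta)\bigr)$ in \eqref{3}. The noise coefficient after this transformation is \emph{not} constant: the compensated jump integral has integrand $g(v,x)=v\bigl((1+v^{-1/\alpha}x)^{1-\eta}-1\bigr)$, which genuinely depends on the state $v$. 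What saves the day is the uniform modulus of continuity $|g(v_2,z)-g(v_1,z)|\le c\,|v_2-v_1|^{1-1/\alpha}z$, and the paper checks the specific conditions of Li--Mytnik for H\"older-in-state noise with (constant, hence Lipschitz) drift. So the relevant citation is Li--Mytnik, applied to a H\"older noise/Lipschitz drift situation, not Priola's theorem for additive noise with H\"older drift. Your sentence ``the noise term is again driven by an $\alpha$-stable process but with a constant noise coefficient'' is therefore incorrect for the paper's transformation. If you instead mean $z^{1-\beta}$ literally, you do get a constant \emph{linearized} noise coefficient, but then the drift becomes $\sim X^{1-\alpha}$ with $1-\alpha<0$, which is \emph{singular} at $0$ rather than H\"older; Priola's result does not apply, and you would be in Bessel-type singular-drift territory for a stable driver, for which you have no off-the-shelf tool. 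Either reading of your sketch leaves a genuine gap.

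Two further structural differences are worth flagging. First, the paper does not concatenate excursions at $T_0$ or invoke the strong Markov property at $T_0$; it simply solves the transformed SDE \eqref{3b} globally (strong existence and pathwise uniqueness in the whole class of nonnegative solutions), verifies that the solution is of type $\mathcal S$ because a comparison of \eqref{3} and \eqref{3b} forces $\int_0^t\un{\{Z_s\neq 0\}}\,ds=t$, and pushes the solution back through the power map via the Meyer--It\^o formula (Lemma~\ref{hinher2}). Your excursion-gluing picture is plausible but is not what is done, and you would have to establish uniqueness of the glued process in $\mathcal S$ separately. Second, for part (B) the paper's arguments are short and sharp: for $\theta<\Gamma(\alpha\beta)/\Gamma(\eta)$ one takes expectations in \eqref{3b} and gets $0\le \E[V_t]\le V_0 + (1-\eta)\bigl(\theta - \Gamma(\alpha\beta)/\Gamma(\eta)\bigr)t$, a contradiction for large $t$; for $\theta=\Gamma(\alpha\beta)/\Gamma(\eta)$ the constant drift vanishes, so the solution stopped at $T_0$ also solves \eqref{3b}, and pathwise uniqueness (which still holds) forces $V\notin\mathcal S$. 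Your ``boundary classification/occupation time'' route is vaguer and would need significant work to turn into a proof; in particular, showing that a nonnegative solution with nonpositive drift and state-degenerate noise cannot leave $0$ is exactly the kind of subtle statement that the paper sidesteps with the clean expectation and pathwise-uniqueness arguments.
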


\begin{remark}
Recently, Bass et al. have obtained a strong uniqueness result in a class similar to $\mathcal S$ but without the positivity requirement, for solutions of  a Brownian-driven SDE with H\"older diffusion coefficient and no drift (and a similar singularity at 0).
\end{remark}
	
To combine the two theorems notice that with the choice (\ref{ass}) of parameters $\alpha,\beta, \eta$ the inequality
\begin{equation}\label{<}
	\frac{\Gamma(\alpha\beta)}{\Gamma(\eta)} < \Gamma(\alpha),
\end{equation}
holds, see Lemma \ref{eta} below. Therefore, for $\beta\in  (1-1/\alpha,1)$, Theorems \ref{T0} and \ref{T1} define three regimes for the SDE (\ref{2b}) as $\theta$ varies. 
\begin{itemize}
\item[-] If $\theta \le {\Gamma(\alpha \beta) \over \Gamma(\eta)}$ then the set of solutions of type $\mathcal S$ is empty. We can not rule out the existence of solutions outside $\mathcal S$ other than $\bar Z$.
%and therefore the unique solution (\ref{2b}) is $(\bar Z_t)_{t\ge 0}$ which is absorbed in 0. 
\item[-] If $ {\Gamma(\alpha \beta) \over \Gamma(\eta)} <\theta <\Gamma(\alpha)$ there is a unique, strong, non-trivial solution in $\mathcal S$ which hits zero in finite time almost surely. When $\beta>1-1/\alpha, \bar Z$ is still a strong solution. Therefore, in this case,  we have a non-uniqueness phenomenon for solutions of Equation (\ref{2b}).
\item[-] Finally, if $\theta \ge \Gamma(\alpha)$ there is a unique solution which never hits zero.
\end{itemize}
It is interesting to note that the regimes can be equally obtained from the theory of positive self-similar Markov processes as we explain below in Section \ref{S: ss ext}.

\medskip
Let us finally discuss the connection to the particular boundary case $\alpha=2, \beta=1/\alpha=1/2$ (which is not covered by (\ref{ass})).  Equation (\ref{2b}) becomes, for $\theta\geq 0$ and $Z_0\geq 0$
\begin{align*}
Z_t=Z_0+\int_0^t\sqrt{Z_s}\,dB_s+\theta \,t, \qquad t\geq 0.
\end{align*}
We recognize in $(2Z_t, t\ge0)$ a squared-Bessel process of dimension $2\theta.$ The drift is constant, and pathwise uniqueness always holds due to the classical results of Yamada and Watanabe.
Since $\Gamma(0)=+\infty$, the interesting regime B in Theorem \ref{T1} reduces to the case $\theta=0$, where 0 is a trap for $Z$ by pathwise uniqueness. Since $\Gamma(2)=1$, the dichotomy of Theorem \ref{T0} corresponds to the fact that a Bessel process of dimension $2\theta$ hits 0 in finite time with positive probability iff $\theta<1$.

\subsubsection{Organization of the Proofs}	

In Section 2 we use the theory of positive self-similar Markov processes to prove Theorem \ref{T0}. The arguments for the proof of Theorem \ref{T1} are gathered in Section 3. Finally, in Section 4 we show how our results can be used to construct self-similar extensions of $(Z_{t\wedge T_0})_{t\geq 0}$.

\section{Self-similarity and the Proof of Theorem \ref{T0}}

A positive self-similar Markov process (pssMp) of index $\gamma$ is a strong Markov family $(\P^x)_{x>0}$ with coordinate process denoted by $Z$ in the Skorohod space of c\`adl\`ag functions  satisfying
\begin{align}\label{05}
\text{the law of } (cZ_{c^{-1/\gamma} t})_{t\geq 0}\text{ under }\P^x \text{ is given by }\P^{cx}
\end{align}
for all $c>0$. John Lamperti has shown in \cite{L} that this property is equivalent to the existence of a L\'evy process $\xi$ such that, under $\P^x$,  the process $( Z_{t\wedge T_0})_{t\geq 0}$ has the same law as
$\big( x \exp\big(\xi_{\tau({tx^{-1/\gamma})}} \big)\big)_{t\geq 0}$, where
\begin{align*}
\tau(t):=\inf\{s\geq 0: A_s> t\}
\qquad \text{and}\qquad A_t:=\int_0^t \exp\left(\frac{1}{\gamma}\xi_s\right) \, ds.
\end{align*}
Since this is all we need, we assume from now on that the L\'evy process $\xi$ is conservative, i.e. the lifetime is infinite. The proof of Theorem \ref{T0} is based on the equivalence 
\begin{align}\label{5}
T_0<\infty\quad \text{a.s. for all initial conditions }Z_0>0\quad \Longleftrightarrow \quad \xi\text{ drifts to }-\infty
\end{align}	
for pssMps which is due to Lamperti \cite{L}. In order to connect the SDE (\ref{2b}) to these results we start with a simple lemma.
\begin{lemma}\label{Lem1}
Suppose that $\beta\in [1-\frac{1}{\alpha},1)$, then, for any initial condition $x>0$, the SDE (\ref{2b}) admits a unique non-negative solution absorbed at zero. The induced Markov family $(\P^x)_{x>0}$ is self-similar of index $1/(1-\eta)\geq 1$.
\end{lemma}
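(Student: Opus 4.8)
The plan is to prove the lemma in three steps: (i) construct the solution absorbed at $0$ and check it is the unique non-negative absorbed solution; (ii) observe that the resulting family $(\P^x)_{x>0}$ is (strong) Markov; (iii) verify the scaling identity \eqref{05} with $\gamma=1/(1-\eta)$, which is the real content.

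For (i) I would start from the observation made just before the statement: since $z\mapsto z^\beta$ and $z\mapsto\theta z^\eta$ are Lipschitz on $[\gep,\infty)$ for every $\gep>0$, \eqref{2b} has a pathwise unique strong solution up to $T_0$. Two things must be added. First, non-explosion before $T_0$: localising at $\tau_n:=\inf\{t:Z_t\ge n\}$, the stopped stochastic integral $\int_0^{\cdot\wedge\tau_n}Z_{s-}^{\beta}\,dL_s$ is a true martingale (its integrand is bounded by $n^\beta$ and $\alpha\in(1,2)$), so $t\mapsto\E[Z_{t\wedge\tau_n}]$ satisfies $\E[Z_{t\wedge\tau_n}]\le x+\theta\int_0^t(1+\E[Z_{s\wedge\tau_n}])\,ds$ using $z^\eta\le 1+z$; Gr\"onwall bounds this uniformly in $n$, and since $Z_{t\wedge\tau_n}\ge n$ on $\{\tau_n\le t\}$ we get $\P(\tau_n\le t)\le n^{-1}\E[Z_{t\wedge\tau_n}]\to0$, so $\tau_n\uparrow\infty$ and the solution lives on all of $[0,\infty)$ after absorption. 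Second, because $L$ is spectrally positive it has no negative jumps, so $Z$ can only reach $0$ continuously and $Z_{T_0}=0$ on $\{T_0<\infty\}$; extending by $0$ after $T_0$ therefore yields a genuine solution of \eqref{2b} (both integrands vanish once $Z\equiv0$). Any non-negative solution absorbed at $0$ must agree with this one up to $T_0$ by the local pathwise uniqueness, hence everywhere, which gives uniqueness in the stated class. Step (ii) is then standard: a family of pathwise unique strong solutions is strong Markov, with $0$ an absorbing point.

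For (iii), set $\gamma:=1/(1-\eta)$ and note $\gamma\ge1$ because $\eta\in[0,1)$. Fix $c>0$, let $Z$ be the solution started at $x$ driven by $L$, and put $a:=c^{-1/\gamma}$, which by \eqref{ass} equals $c^{-(1-\eta)}=c^{-\alpha(1-\beta)}$, and $\tilde Z_t:=cZ_{at}$. Writing $\hat L_u:=a^{-1/\alpha}L_{au}$, the $\alpha$-stable scaling gives $\hat L\overset{d}{=}L$, and $\hat L$ is a L\'evy process for the time-changed filtration $u\mapsto\mathcal G_{au}$; substituting $s=au$ in \eqref{2b} and using $L_{au}=a^{1/\alpha}\hat L_u$ and $Z_{(au)-}=\tilde Z_{u-}/c$ yields
\[
\tilde Z_t \;=\; cx \;+\; c\,a^{1/\alpha}c^{-\beta}\int_0^t \tilde Z_{u-}^{\beta}\,d\hat L_u \;+\; \theta\,c\,a\,c^{-\eta}\int_0^t \tilde Z_u^{\eta}\,du .
\]
Now the powers of $c$ collapse: since $a^{1/\alpha}=c^{-(1-\beta)}$ one has $c\,a^{1/\alpha}c^{-\beta}=c^{\,1-(1-\beta)-\beta}=1$, and since $1-\eta=\alpha(1-\beta)$ one has $\theta\,c\,a\,c^{-\eta}=\theta\,c^{\,1-(1-\eta)-\eta}=\theta$. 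Hence $\tilde Z$ solves \eqref{2b} started from $cx$ and driven by $\hat L\overset{d}{=}L$, and it is absorbed at its first hitting time $c^{1/\gamma}T_0$ of $0$. Because pathwise uniqueness implies uniqueness in law (Yamada--Watanabe), the law of $(cZ_{c^{-1/\gamma}t})_{t\ge0}$ under $\P^x$ is $\P^{cx}$, i.e. \eqref{05} holds with index $\gamma=1/(1-\eta)$.

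The only genuinely delicate point is (iii), and within it the bookkeeping of the driving noise: one has to phrase the deterministic time change so that it is absorbed into a relabelled L\'evy process $\hat L$ of the same law, and then appeal to pathwise uniqueness to identify the scaled process with the canonical solution started from $cx$. It is precisely the relation $\eta=1-\alpha(1-\beta)$ of \eqref{ass} that forces the three powers of $c$ above to cancel simultaneously, which is why this and only this tuning of $\eta$ makes the family self-similar.
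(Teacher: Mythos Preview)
Your proposal is correct and follows essentially the same route as the paper. The self-similarity argument is identical in substance: rescale time by $c^{-1/\gamma}$ and space by $c$, absorb the time change into a relabelled $\alpha$-stable driver with the same law, check that the choice $\eta=1-\alpha(1-\beta)$ makes both the drift and the noise coefficients scale correctly, and then invoke pathwise uniqueness (equivalently, well-posedness) to identify laws. Your write-up is somewhat more thorough than the paper's on step (i)---you supply the Gr\"onwall non-explosion bound and the observation that spectral positivity forces $Z_{T_0}=0$---but these are elaborations rather than a different argument.
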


\begin{proof}
Existence and pathwise uniqueness before hitting any level $\eps$ follows from the Lipschitz structure of the integrands in $(\eps,\infty)$. Sending $\eps$ to zero this carries over to solutions up to $T_0$.
To prove the self-similarity assertion, we abbreviate $\gamma=1/(1-\eta)$ to obtain
\begin{align*}		\begin{split}
cZ_{tc^{-1/\gamma}} &
=cZ_0+\int_0^{tc^{-1/\gamma}} cZ_{s-}^{\beta} dL_s + \theta\int_0^{tc^{-1/\gamma}} 
 cZ_s^{1-1/\gamma} ds
			\\ & = cZ_0+\int_0^{t} cZ_{(sc^{-1/\gamma})-}^{\beta} dL_{(c^{-1/\gamma}s)} + \theta\int_0^t c^{1-1/\gamma}Z_{sc^{-1/\gamma}}^{1-1/\gamma} ds
			\\ & = cZ_0+\int_0^{t} \left(cZ_{(sc^{-1/\gamma})-}\right)^{\beta} dL^c_s + 
 \theta\int_0^t \left(cZ_{sc^{-1/\gamma}}\right)^{1-1/\gamma} ds,\end{split}
		\end{align*}
		where the L\'evy process $L^c_t:=c^{1/(\alpha\gamma)}L_{tc^{-1/\gamma}}$ has the same distribution as $L$, and we have used in particular that $c^{1-1/(\alpha\gamma)}=c^{\beta}.$
		The self-similarity now follows from well-posedness of the SDE before hitting zero.
	\end{proof}
	
	Next, we calculate the L\'evy process $\xi$ corresponding to solutions of the SDE (\ref{2b}) via Lamperti's transformation. For $\theta=0$ and $\beta=1/\alpha$, i.e. the stable CSBPs without immigration, $\xi$ can be recovered from Proposition 2 of Kyprianou and Pardo \cite{KP} combined  with the generator calculations of Caballero and Chaumont \cite{CC2}.
	\begin{lemma}\label{Lem2}
		Suppose that  $\mathcal M$ is a Poisson point process on $(0,\infty)\times (0,\infty)$ with intensity measure $\mathcal M'(ds,dx)=ds\otimes c_\alpha e^x(e^x-1)^{-\alpha-1}\,dx$, then
		\begin{align}\label{levy}
			\xi_t:=\left(\theta+\int_0^\infty\left(\log (1+x)-x\right) c_\alpha x^{-1-\alpha}\,dx\right)t+\int_0^t\int_0^\infty x \,\mathcal{(M-M')}(ds,dx)
		\end{align}
		is the L\'evy process  corresponding under Lamperti's transformation to the pssMp $(\P^x)_{x>0}$ defined by the SDE (\ref{2b}).
	\end{lemma}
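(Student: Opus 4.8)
The plan is to obtain $\xi$ by applying Itô's formula to $\log Z$ on the stochastic interval $[0,T_0)$ and then performing the Lamperti time change explicitly. Recall from Lemma~\ref{Lem1} that the self-similarity index is $\gamma=1/(1-\eta)$, so $1/\gamma=1-\eta=\alpha(1-\beta)$, and that Lamperti's clock is the additive functional $\zeta(t):=\int_0^t Z_s^{-1/\gamma}\,ds=\int_0^t Z_s^{\eta-1}\,ds$, whose inverse $\varphi$ satisfies $\varphi'(u)=Z_{\varphi(u)}^{1-\eta}$; the associated L\'evy process is then $\xi_u:=\log Z_{\varphi(u)}$ (started from $\log Z_0$). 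Writing $L$ through its jump measure $N$ with compensator $ds\otimes\nu(dx)$, $\nu(dx)=c_\alpha x^{-1-\alpha}\,dx$, equation \eqref{2b} reads $dZ_t=\theta Z_t^\eta\,dt+\int_0^\infty Z_{t-}^\beta x\,(N-N')(dt,dx)$ on $[0,T_0)$, and there is no Brownian part.

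The Itô computation is routine: the jump of $\log Z$ at an atom $(s,x)$ of $N$ is $\log(1+Z_{s-}^{\beta-1}x)$, and after compensating one gets
\begin{align*}
\log Z_t=\log Z_0+\int_0^t\!\Big(\theta Z_s^{\eta-1}+\int_0^\infty\!\big(\log(1+Z_{s-}^{\beta-1}x)-Z_{s-}^{\beta-1}x\big)\,\nu(dx)\Big)ds+\int_0^t\!\!\int_0^\infty\!\log(1+Z_{s-}^{\beta-1}x)\,(N-N')(ds,dx).
\end{align*}
The substitution $y=Z_{s-}^{\beta-1}x$ turns the state-dependent measure into $\nu(dx)=c_\alpha Z_{s-}^{-\alpha(1-\beta)}y^{-1-\alpha}\,dy=c_\alpha Z_{s-}^{\eta-1}y^{-1-\alpha}\,dy$, using precisely the identity $\alpha(1-\beta)=1-\eta$ (equivalently $c^{1-1/(\alpha\gamma)}=c^\beta$) that drove the self-similarity in Lemma~\ref{Lem1}. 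Hence the drift integrand is $Z_s^{\eta-1}b$ with $b:=\theta+\int_0^\infty(\log(1+y)-y)\,c_\alpha y^{-1-\alpha}\,dy$, exactly the coefficient of $t$ in \eqref{levy} (the integral converges since $\log(1+y)-y\sim-y^2/2$ at $0$ and $\sim-y$ at $\infty$, with $1<\alpha<2$). Writing the jump sizes as $h=\log(1+Z_{s-}^{\beta-1}x)$, i.e. $x=Z_{s-}^{1-\beta}(e^h-1)$, the same change of variables shows the compensator of the jump measure of $\log Z$, in coordinates $(s,h)$, is $Z_{s-}^{\eta-1}\,ds\otimes c_\alpha e^h(e^h-1)^{-1-\alpha}\,dh$.

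Finally I would perform the time change. The functional $\zeta$ is continuous and strictly increasing on $[0,T_0)$ and, under the standing assumption that the Lamperti L\'evy process is conservative, maps $[0,T_0)$ bijectively onto $[0,\infty)$. Changing variables $s=\varphi(u)$, $ds=Z_{\varphi(u)}^{1-\eta}\,du$, the factor $Z_{s-}^{\eta-1}$ cancels in both the drift and the compensator: the drift of $\xi$ becomes the linear term $bu$, and the compensator of the jump measure of $\xi$ becomes the \emph{deterministic} measure $du\otimes c_\alpha e^h(e^h-1)^{-1-\alpha}\,dh$. By Watanabe's characterisation, a simple point measure with deterministic compensator is a Poisson random measure — so the jumps of $\xi$ are exactly the process $\mathcal{M}$ of the statement — and a deterministic compensator forces stationary independent increments, so $\xi$ is a L\'evy process; collecting the drift and the compensated jump part gives precisely \eqref{levy}. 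That this $\xi$ coincides with the Lamperti L\'evy process of $(\P^x)_{x>0}$ is immediate, since $\xi_u=\log Z_{\varphi(u)}$ is Lamperti's representation inverted.

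The step needing the most care is the rigorous time change of the stochastic integral and of the integer-valued random measure: $\zeta$ is an adapted, continuous, strictly increasing additive functional, so the standard theory of time-changed semimartingales and random measures (in the spirit of the Dambis--Dubins--Schwarz theorem for purely discontinuous martingales, and Jacod's time-change formula for random measures) applies, but one must check that $\zeta$ is finite on compacts of $[0,T_0)$ and handle the left limits $Z_{s-}$ carefully at the countably many jump times. An alternative route, avoiding pathwise time changes, is to compute the generator $\cA$ of \eqref{2b} on a suitable core and use the scaling identity $\cA(g\circ\log)(x)=x^{\eta-1}\,\cL g(\log x)$ relating it to the generator $\cL$ of the Lamperti L\'evy process; substituting $y=x^{\beta-1}x'$ in the integral term of $\cA$ and reading off $\cL$ yields the same $\xi$, at the cost of separately justifying that $\cA$ is the full generator and that the identity holds on enough test functions.
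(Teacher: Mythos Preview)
Your argument is correct and essentially dual to the paper's. You go \emph{from $Z$ to $\xi$}: apply It\^o's formula to $\log Z$, change variables to identify the state-dependent compensator $Z_{s-}^{\eta-1}\,ds\otimes c_\alpha e^h(e^h-1)^{-1-\alpha}\,dh$, then remove the factor $Z^{\eta-1}$ via the Lamperti time change and invoke Watanabe's theorem to recognise a Poisson random measure. The paper goes \emph{from $\xi$ to $Z$}: it starts from the process $\xi$ defined by \eqref{levy}, rewrites it via the substitution $h\mapsto\log(1+x)$ against the PPP $\cN$ with intensity $c_\alpha x^{-1-\alpha}\,dx$, applies It\^o's formula to $M_t:=\exp(\xi_t)$ to obtain the linear equation $M_t=1+\theta\int_0^t M_s\,ds+\int_0^t M_{s-}\,dL_s$, then performs the Lamperti time change in the forward direction to show that $M_{\tau(\cdot)}$ is a weak solution of \eqref{2b}, and concludes by the pathwise uniqueness of Lemma~\ref{Lem1}. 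The two routes trade one nontrivial ingredient for another: you need a rigorous time change of the integer-valued random measure together with Watanabe's characterisation, whereas the paper needs uniqueness of solutions to \eqref{2b} up to $T_0$. The paper's direction is arguably a little cleaner because the time change acts on the explicit L\'evy process (so showing that $\tilde L_t:=\int_0^{\tau(t)}\lambda_{s-}^{1/\alpha}\,dL_s$ is again spectrally positive $\alpha$-stable reduces to a one-line compensator identity), while your route must handle the random density $Z_{s-}^{\eta-1}$ in the compensator before the time change and justify that $\zeta$ maps $[0,T_0)$ onto $[0,\infty)$ --- points you rightly flag as needing care.
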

	\begin{proof}
		First note that $\xi$ can equivalently be written as
		\begin{equation}\label{9}\begin{split}
			\xi_t= & \left(\theta+\int_0^\infty\left(\log (1+x)-x\right) c_\alpha x^{-1-\alpha}\,dx\right)t
			\\ & +\int_0^t\int_0^\infty \log(1+x) \mathcal{(N-N')}(ds,dx),
			\end{split}
		\end{equation}
		where $\mathcal N$ is a Poisson point process on $(0,\infty)\times (0,\infty)$ with intensity measure $\mathcal N'(ds,dx)=ds\otimes c_\alpha x^{-1-\alpha}$. The equivalence follows for instance from Theorem II.1.8 of Jacod and Shiryaev \cite{JS} and the compensator calculation, for any measurable function $W$ with compact support,
		\begin{align*}
			\int_0^t\int_0^\infty W(s,x) \mathcal M'(ds,dx)
&=\int_0^t \int_0^\infty W(s,x)c_\alpha e^x(e^x-1)^{-1-\alpha}\,dx
			\\&=\int_0^t\int_0^\infty W(s,\log(x+1))c_\alpha x^{-1-\alpha}\,dx\\
			&= \int_0^t\int_0^\infty W(s,\log(x+1))\mathcal N'(ds,dx)
		\end{align*}
		so that the jump-measures of both Poissonian integrals have the same deterministic intensity. It\={o}'s formula (see page 44 of Ikeda and Watanabe \cite{IW}) applied to \eqref{9} directly shows that $M_t:=\exp(\xi_t)$ satisfies
		\begin{align}\label{linear}
			M_t = 1+\theta\int_0^t M_{s} \, ds + \int_0^t\int_0^\infty M_{s-}x\,(\mathcal{N-N'})(ds,dx).
		\end{align}
		Recalling from the L\'evy-It\={o} representation that
		\begin{align*}
			L_t:=\int_0^t\int_0^\infty x\, \mathcal{(N-N')}(ds,dx)
		\end{align*}
		is a spectrally positive $\alpha$-stable L\'evy process with Laplace exponent $\lambda^\alpha$ and inserting in (\ref{linear}) shows that $\exp(\xi_t)$ solves
		\begin{align*}
					M_t = 1+\theta\int_0^t M_{s} \, ds +\int_0^tM_{s-}\,dL_s.
		\end{align*}
		Next, we have to include the time-change: since $\gamma=1/(1-\eta)$, Lamperti's time-change becomes
		\begin{align*}
			 \tau(t) :=\inf\{s>0: A_s>t\},\quad A_t :=\int_0^t \lambda_s \, ds, 
			 \quad \lambda_s := \exp\{(1-\eta) \xi_s\}.
		\end{align*}
		If we set
		\begin{align*}
			\tilde L_t:= \int_0^{\tau(t)} \lambda_{s-}^{1/\alpha} \, dL_s, \qquad t>0,
		\end{align*}
then we claim that $(\tilde L_t)_{t\geq 0}$ has the same law as $(L_t)_{t\geq 0}$. Indeed, let us denote by 
$\mathcal{\tilde N}$, respectively $\mathcal{\tilde N}'$, the image measure of
$\mathcal{N}$, resp. $\mathcal{N}'$, under the map $(s,x)\mapsto (A_s,\lambda_{s-}^{1/\alpha}x)$. Then $\mathcal{\tilde N}$ is an optional random measure, whose compensator $\mathcal{\tilde N}'$ is equal to $\mathcal{N}'$, since using the change of variable $(A_s,\lambda_{s-}^{1/\alpha}x)=(r,y)$, we find
\[
\int_0^\infty\int_0^\infty f(A_s,\lambda_{s-}^{1/\alpha}x) \,ds c_\alpha x^{-1-\alpha} \, dx =\int_0^\infty\int_0^\infty f(r,y) \, \lambda_{\tau(r)}^{-1-\frac1\alpha+\frac1\alpha+1} \, dr \,c_\alpha  y^{-1-\alpha} dy.
\]
By \cite[Theorem II.1.8]{JS}, $\mathcal{\tilde N}$ and $\mathcal{N}$ have the same law. Therefore, since
\[
\begin{split}
\int_0^t\int_0^\infty x \, \mathcal{(\tilde N-\tilde N')}(ds,dx)
& = \int_0^\infty\int_0^\infty \un{(A_s\leq t)} \, \lambda_{s-}^{1/\alpha}\, x \, \mathcal{( N-N')}(ds,dx)
\\ & = \int_0^{\tau(t)} \lambda_{s-}^{1/\alpha} \, dL_s = \tilde L_t,
\end{split}
\]
the claim is proved. Plugging-in, we obtain
	\begin{align*}
		M_{\tau(t)} & =1+\theta\int_0^{\tau(t)} M_{s} \, ds +\int_0^{\tau(t)} M_{s-} dL_s\\
		&=1+\theta\int_0^t M_{\tau(u)} \, \dot\tau(u) \, du + \int_0^t M_{\tau(u)-}e^{-(1-\beta) \xi^{}_{\tau(u)-}}d\tilde L_u
		\\ & = 1+ \theta\int_0^t e^{\xi^{}_{\tau(u)}(1-(1-\eta))} \, du +  \int_0^t e^{\xi^{}_{\tau(u)-}(1-(1-\beta) )} d\tilde L_u
		\\ & = 1+\theta\int_0^t M_{\tau(u)}^{\eta} \, du +  \int_0^t M_{\tau(u)-}^{\beta} d\tilde L_u.
	\end{align*}
	Hence,  $(M_{\tau(t)})_{t\leq T_0}$ is a weak solution to the SDE (\ref{2b}) until first hitting zero. Uniqueness of solutions then implies that $\xi$ is the Lamperti transformed L\'evy process corresponding to the solution of the SDE (\ref{2b}).		
	\end{proof}

	\begin{cor}\label{C}
		Suppose that $\xi$ is the Lamperti transformed L\'evy process corresponding to the pssMp $(\P^x)_{x>0}$ induced by the solutions to the SDE (\ref{2b}), then
		\begin{align}\label{6}
			\E[\exp(\lambda\xi^{}_1)] = \exp\left(\lambda\left(\theta-\frac{\Gamma(\alpha-\lambda)}{\Gamma(1-\lambda)}\right)\right)
		\end{align}
		for $\lambda\in [0,1).$
	\end{cor}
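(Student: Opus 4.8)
The plan is to read the Lévy–Khintchine exponent of $\xi$ off the representation \eqref{9} and then evaluate the resulting integral in closed form. By \eqref{9}, $\xi$ has no Gaussian part, drift coefficient $b=\theta+\int_0^\infty(\log(1+x)-x)\,c_\alpha x^{-1-\alpha}\,dx$ (with $c_\alpha=\alpha(\alpha-1)/\Gamma(2-\alpha)$, as in the Lévy–Itô representation of $L$), and compensated jumps $\int_0^\infty \log(1+x)\,(\mathcal N-\mathcal N')(ds,dx)$ with $\mathcal N'(ds,dx)=ds\otimes c_\alpha x^{-1-\alpha}\,dx$. Hence, whenever the integral below converges,
\[
\E[\exp(\lambda\xi_1)]=\exp(\psi(\lambda)),\qquad
\psi(\lambda)=\lambda b+c_\alpha\int_0^\infty\bigl((1+x)^\lambda-1-\lambda\log(1+x)\bigr)\,x^{-1-\alpha}\,dx .
\]
First I would check that this is licit for $\lambda\in[0,1)$: the integrand is $O(x^2)$ as $x\downarrow0$, integrable against $x^{-1-\alpha}$ because $\alpha<2$, and $O(x^\lambda)$ as $x\uparrow\infty$, integrable against $x^{-1-\alpha}$ because $\lambda<1<\alpha$; so $\xi$ has finite exponential moments of all orders $\lambda\in[0,1)$.

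Next I would merge the compensator already built into $b$ with the one produced by the exponent: adding $\lambda b$ to the integral the two $\pm\lambda\log(1+x)$ terms cancel, leaving
\[
\psi(\lambda)=\lambda\theta+c_\alpha\,J(\lambda),\qquad
J(\lambda):=\int_0^\infty\bigl((1+x)^\lambda-1-\lambda x\bigr)\,x^{-1-\alpha}\,dx ,
\]
so that it remains to show $c_\alpha J(\lambda)=-\lambda\,\Gamma(\alpha-\lambda)/\Gamma(1-\lambda)$. The key step is to integrate $J(\lambda)$ by parts twice against $x^{-1-\alpha}\,dx$. Since $(1+x)^\lambda-1-\lambda x$ is $O(x^2)$ at $0$ and $O(x)$ at $\infty$ while $1<\alpha<2$, the first boundary term vanishes and one gets $J(\lambda)=\tfrac{\lambda}{\alpha}\int_0^\infty x^{-\alpha}\bigl((1+x)^{\lambda-1}-1\bigr)\,dx$; a second integration by parts (boundary term vanishing again, now because $2-\alpha>0$ at $0$ and $1-\alpha<0$ at $\infty$) turns this into the Beta integral $\tfrac{\lambda(1-\lambda)}{\alpha(1-\alpha)}\int_0^\infty x^{1-\alpha}(1+x)^{\lambda-2}\,dx=\tfrac{\lambda(1-\lambda)}{\alpha(1-\alpha)}\,B(2-\alpha,\alpha-\lambda)$, the two Beta parameters being positive precisely because $\alpha<2$ and $\lambda<1<\alpha$.

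Finally I would simplify, using $B(2-\alpha,\alpha-\lambda)=\Gamma(2-\alpha)\Gamma(\alpha-\lambda)/\Gamma(2-\lambda)$, the identities $\Gamma(2-\alpha)=(1-\alpha)\Gamma(1-\alpha)$ and $\Gamma(2-\lambda)=(1-\lambda)\Gamma(1-\lambda)$, and $c_\alpha=\alpha(\alpha-1)/\Gamma(2-\alpha)$; the factors $\alpha$, $1-\alpha$, $1-\lambda$ and $\Gamma(2-\alpha)$ cancel, giving $c_\alpha J(\lambda)=-\lambda\,\Gamma(\alpha-\lambda)/\Gamma(1-\lambda)$, hence $\psi(\lambda)=\lambda\bigl(\theta-\Gamma(\alpha-\lambda)/\Gamma(1-\lambda)\bigr)$ and \eqref{6}.

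The step I expect to be the main obstacle is not any single computation but the bookkeeping around it: one must keep track of the two distinct compensator terms — the one already inside the drift $b$ of \eqref{9} and the one generated by forming $\psi$ — and carry out the $\pm\lambda\log(1+x)$ cancellation \emph{before} evaluating $J(\lambda)$, and one must verify that all the integration-by-parts boundary terms genuinely vanish uniformly over $\alpha\in(1,2)$, $\lambda\in[0,1)$. As sanity checks: the same double integration by parts applied to $\int_0^\infty(e^{-\lambda x}-1+\lambda x)\,x^{-1-\alpha}\,dx$ returns $\lambda^\alpha\Gamma(2-\alpha)/(\alpha(\alpha-1))$, i.e. exactly the normalization that makes $c_\alpha$ the Lévy density of $L$; and, since $M_t=e^{\xi_t}$ is by \eqref{linear} the stochastic exponential of $\theta t+L_t$, \eqref{6} may equally be read as an identity for the fractional moments of that exponential.
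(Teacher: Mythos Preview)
Your proof is correct and follows essentially the same route as the paper: apply the exponential formula to the representation \eqref{9}, cancel the $\pm\lambda\log(1+x)$ terms to reduce to $J(\lambda)=\int_0^\infty((1+x)^\lambda-1-\lambda x)\,c_\alpha x^{-1-\alpha}\,dx$, integrate by parts twice, and identify a Beta integral. The only cosmetic difference is that the paper, after the second integration by parts, substitutes $x\mapsto 1/y$ to land on $B(\alpha-\lambda,2-\alpha)$ as an integral over $(0,1)$, whereas you recognize $\int_0^\infty x^{1-\alpha}(1+x)^{\lambda-2}\,dx=B(2-\alpha,\alpha-\lambda)$ directly on $(0,\infty)$; the subsequent Gamma simplification is the same (note that your mention of $\Gamma(2-\alpha)=(1-\alpha)\Gamma(1-\alpha)$ is superfluous, since $\Gamma(2-\alpha)$ cancels directly against $c_\alpha$).
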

	\begin{proof}
		All we need to do is to apply the exponential formula (see Theorem 25.17 of Sato \cite{Sat})) to the L\'evy process $\xi$ represented as in \eqref{9} 		\begin{align*}
			\E[\exp(\lambda\xi^{}_1)]
%			 &=\exp\Bigg(\lambda\left(\theta+\int_0^\infty\left(\log (1+x)-x\right) c_\alpha x^{-1-\alpha}\,dx\right)\\
%			&\quad +\int_0^\infty \big(e^{\lambda x}-1-\lambda x\big)c_\alpha e^x(e^x-1)^{-1-\alpha}\,\,dx\Bigg)\\
			&=\exp\Bigg(\lambda\left(\theta+\int_0^\infty\left(\log (1+x)-x\right) c_\alpha x^{-1-\alpha}\,dx\right)\\
			&\quad +\int_0^\infty \big((1+x)^\lambda-1-\lambda \log(1+x)\big)c_\alpha x^{-1-\alpha}\,\,dx\Bigg)\\
			&=\exp\Bigg(\lambda\theta +\int_0^\infty \big((1+x)^\lambda-1-\lambda x\big)c_\alpha x^{-1-\alpha}\,\,dx\Bigg).
		\end{align*}
		To calculate the inner integral we use twice partial integration to obtain
		\begin{align}\label{eq:7_1}
			{\int_0^\infty \big((1+x)^\lambda-1-\lambda x\big)c_\alpha x^{-1-\alpha}\,\,dx}
=\frac{\lambda(\lambda-1)}{\alpha(\alpha-1)}c_\alpha\int_1^\infty x^{\lambda-2} (x-1)^{-\alpha+1}\,dx.
		\end{align}
		Substituting $x$ by $1/y$ and recalling that $c_\alpha=\frac{\alpha(\alpha-1)}{\Gamma(2-\alpha)}$ then yields
		\begin{align}\label{eq:7_2}
		 {
			\frac{\lambda(\lambda-1)}{\Gamma(2-\alpha)}\int_0^1 \frac{1}{y^{\lambda-2}} \Big(\frac{1}{y}-1\Big)^{-\alpha+1}\frac{1}{y^2}\,dy}
			&=\frac{\lambda(\lambda-1)}{\Gamma(2-\alpha)}\int_0^1 x^{\alpha-\lambda-1} (1-x)^{1-\alpha}\,dx.
		\end{align}
		The integral can be reformulated via Beta-functions to obtain equality with
		\begin{align}\label{eq:7_3}
			\frac{\lambda(\lambda-1)}{\Gamma(2-\alpha)} B(\alpha-\lambda,2-\alpha)
			&=\frac{\lambda(\lambda-1)}{\Gamma(2-\alpha)} \frac{\Gamma(\alpha-\lambda)\Gamma(2-\alpha)}{\Gamma(2-\lambda)}	
=- \lambda\frac{\Gamma(\alpha-\lambda)}{\Gamma(1-\lambda)},
		\end{align}
		where for the first equality we used $B(x,y)=\Gamma(x)\Gamma(y)/\Gamma(x+y)$ and for the second $\Gamma(x+1)=x\Gamma(x)$.
	\end{proof}

	\begin{cor}\label{Lem4}Let $\beta\in[1-1/\alpha,1)$ and
		suppose that $\xi$ is the Lamperti transformed L\'evy process corresponding to the pssMp $(\P^x)_{x>0}$ induced by the solutions to the SDE (\ref{2b}), then
		\begin{itemize}
			\item[i)] $\xi^{}$ drifts to $-\infty$ if and only if $\theta<\Gamma(\alpha)$,
			\item[ii)] there is $0<a<1-\eta$ such that $\E[e^{a \xi_1^{}}]>1$ if and only if $\theta>\frac{\Gamma(\alpha\beta)}{\Gamma(\eta)}$.
		\end{itemize}
	\end{cor}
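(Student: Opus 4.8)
The plan is to read both statements off the explicit moment generating function provided by Corollary \ref{C}. We write
\[
\kappa(\lambda):=\log\E\bigl[e^{\lambda\xi_1}\bigr]=\lambda\Bigl(\theta-\tfrac{\Gamma(\alpha-\lambda)}{\Gamma(1-\lambda)}\Bigr),\qquad \lambda\in[0,1),
\]
for the Laplace exponent of $\xi$. By \eqref{levy} the L\'evy measure of $\xi$ has an exponentially decaying right tail, so $\xi$ has a finite mean $\E[\xi_1]=\kappa'(0)$; differentiating the product $\lambda\mapsto\lambda\,\Gamma(\alpha-\lambda)/\Gamma(1-\lambda)$ at $\lambda=0$ and using $\Gamma(\alpha)/\Gamma(1)=\Gamma(\alpha)$ gives $\kappa'(0)=\theta-\Gamma(\alpha)$.

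For part (i) we invoke the classical trichotomy: a L\'evy process of finite mean drifts to $+\infty$, drifts to $-\infty$, or oscillates according as its mean is positive, negative, or zero. Hence $\xi$ drifts to $-\infty$ if and only if $\kappa'(0)=\theta-\Gamma(\alpha)<0$, that is, if and only if $\theta<\Gamma(\alpha)$.

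For part (ii) set $h(a):=\Gamma(\alpha-a)/\Gamma(1-a)$. From $\beta\in[1-1/\alpha,1)$ and $\alpha<2$ we get $1-\eta=\alpha(1-\beta)\in(0,1]$, so $(0,1-\eta)\subseteq(0,1)$ and Corollary \ref{C} applies: for $a\in(0,1-\eta)$,
\[
\E\bigl[e^{a\xi_1}\bigr]>1 \iff \kappa(a)>0 \iff \theta>h(a),
\]
the last equivalence dividing by $a>0$. Thus such an $a$ exists if and only if $\theta>\inf_{a\in(0,1-\eta)}h(a)$. Now $h$ is strictly decreasing on $(0,1)$: writing $\Psi:=(\log\Gamma)'$ for the digamma function, $(\log h)'(a)=\Psi(1-a)-\Psi(\alpha-a)<0$ since $\Psi$ is strictly increasing on $(0,\infty)$ and $1-a<\alpha-a$. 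Therefore $\inf_{a\in(0,1-\eta)}h(a)=\lim_{a\uparrow 1-\eta}h(a)=h(1-\eta)=\Gamma(\alpha\beta)/\Gamma(\eta)$, using $\alpha-(1-\eta)=\alpha\beta$ and $1-(1-\eta)=\eta$ (with the bound read as $0$ when $\eta=0$, where $h(a)\to 0$ as $a\uparrow 1$). This is exactly the claimed equivalence.

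The arguments are routine; the only steps deserving a remark are the finiteness of $\E[\xi_1]$ together with the fact that its sign governs the drift of $\xi$ — both standard, from \eqref{levy} and the L\'evy trichotomy — and the monotonicity of $h$, which is the digamma estimate above; so we expect no real obstacle.
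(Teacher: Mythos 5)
Your proof is correct and follows the same overall strategy as the paper: everything is read off the explicit Laplace exponent $\psi(\lambda)=\lambda\bigl(\theta-\Gamma(\alpha-\lambda)/\Gamma(1-\lambda)\bigr)$ from Corollary~\ref{C}. The only (minor) difference is tactical: where the paper argues via convexity of $\psi$ — noting $\psi(0)=0$, $\psi'(0^+)=\E[\xi_1]$, so $\E[\xi_1]<0$ iff $\psi(\lambda)<0$ for some $\lambda>0$ for part (i), and using convexity to reduce part (ii) to evaluating $\psi$ at $\lambda=1-\eta$ — you differentiate $\psi$ at $0$ directly to get $\E[\xi_1]=\theta-\Gamma(\alpha)$ for (i), and prove monotonicity of $a\mapsto\Gamma(\alpha-a)/\Gamma(1-a)$ via the digamma function for (ii). Both devices lead to the same place with comparable effort; your digamma argument makes the monotonicity that the paper's convexity argument encodes implicitly more explicit, and both treatments of the boundary case $\eta=0$ (interpreting $\Gamma(0)=\infty$) agree.
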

	
	\begin{proof}	
	Let us first recall that, by H\"older's inequality, the Laplace exponent $\psi(\lambda):=\log\E[e^{\lambda \xi_1^{}}]$ is convex whenever it is well-defined. Furthermore, it satisfies $\psi(0)=0$ and $\psi'(0+)=\E[\xi_1^{}]$. \\
	i) To verify the claim it suffices to check for which values $\theta$ the mean $\E[\xi_1^{}]$ is strictly negative which is equivalent to finding $\lambda>0$ such that $\psi(\lambda)<0$. By our explicit calculation we have $\psi(\lambda)=\lambda\left(\theta-\frac{\Gamma(\alpha-\lambda)}{\Gamma(1-\lambda)}\right), \lambda\in [0,1),$ so that $\xi^{}$ drifts to $-\infty$ if and only if there is $\lambda>0$ such that
		$\theta<\frac{\Gamma(\alpha-\lambda)}{\Gamma(1-\lambda)}.$
	Since the Gamma-function is continuous on $(0,\infty)$ and $\Gamma(1)=1$ this is possible if and only if $\theta<\Gamma(\alpha)$.\\
	ii) Let us first assume $\eta>0$. As the formula for the Laplace exponent is well-defined
  %(and extends continuously) 
for 
$\lambda=1-\eta$, the left-hand side of the claim is equivalent to
	\begin{align*}
		\theta>\frac{\Gamma(\alpha-(1-\eta))}{\Gamma(1-(1-\eta))}=\frac{\Gamma(\alpha\beta)}{\Gamma(\eta)}
	\end{align*}
	due to the convexity of $\psi$. 
Similarly in the case of $\eta=0$, we extend continuously the Laplace exponent to $\lambda=1$, and by taking 
 $\Gamma(0)=\infty$ (this we assume everywhere throughout the paper) we see that the left-hand side of the claim is
 equivalent to $\theta>0$ in this case. 
	\end{proof}
Next, we connect the regimes of the previous corollary, i.e. we verify \eqref{<} above:
\begin{lemma}\label{eta}
Suppose $ \alpha\in (1,2)$ and  $\beta\in [1-1/\alpha,1)$, then
\[
\frac{\Gamma(\alpha\beta)}{\Gamma(\eta)} < \Gamma(\alpha).
\]
\end{lemma}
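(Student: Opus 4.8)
The plan is to recast the inequality so that both sides are values of one and the same function, and then to show that this function is monotone. Using the defining relation $\eta = 1-\alpha(1-\beta)$ one gets $\alpha\beta = \eta + (\alpha-1)$, and since $\alpha\in(1,2)$ it is natural to set $c := \alpha - 1 \in (0,1)$ and $f(t) := \Gamma(t+c)/\Gamma(t)$ for $t>0$. Then the left-hand side of the claim equals $f(\eta)$, while $\Gamma(\alpha) = \Gamma(1+c)/\Gamma(1) = f(1)$, so the assertion is precisely $f(\eta) < f(1)$ for $\eta\in[0,1)$. The endpoint $\eta = 0$ is trivial, because with the convention $\Gamma(0) = +\infty$ the left-hand side is $0 < \Gamma(\alpha)$; thus it remains to prove that $f$ is strictly increasing on $(0,1]$ (in fact on all of $(0,\infty)$).

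To establish that monotonicity I would use the Beta-integral identity $B(t,c) = \Gamma(t)\Gamma(c)/\Gamma(t+c)$, which rewrites $f$ as $f(t) = \Gamma(c)/B(t,c)$ with $B(t,c) = \int_0^1 s^{t-1}(1-s)^{c-1}\,ds$. Differentiating under the integral sign --- legitimate since for $t$ in a fixed compact subinterval of $(0,\infty)$ the integrand and its $t$-derivative are dominated by an integrable function --- gives $\partial_t B(t,c) = \int_0^1 s^{t-1}(\log s)(1-s)^{c-1}\,ds$, which is strictly negative because $\log s < 0$ on $(0,1)$. Hence $t\mapsto B(t,c)$ is strictly decreasing, so $f$ is strictly increasing, and in particular $f(\eta) < f(1) = \Gamma(\alpha)$ for $\eta\in(0,1)$. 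Equivalently, one can observe $(\log f)'(t) = \psi(t+c)-\psi(t) > 0$, since the digamma function $\psi$ is strictly increasing on $(0,\infty)$.

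I do not anticipate any real difficulty: the substantive content is the observation that $\alpha\beta - \eta = \alpha - 1$, which turns the two sides into $f(\eta)$ and $f(1)$, together with the elementary fact that ratios $\Gamma(t+c)/\Gamma(t)$ with $c>0$ increase in $t$. The only places needing a line of care are the convention $\Gamma(0)=+\infty$ at $\eta=0$ and the routine justification of differentiating the Beta integral in $t$.
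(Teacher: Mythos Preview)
Your proof is correct, and it is genuinely different from the paper's argument. The paper does not exploit the identity $\alpha\beta-\eta=\alpha-1$ directly; instead it first establishes the four-parameter inequality
\[
\Gamma(p+n)\,\Gamma(q+m) > \Gamma(p+q)\,\Gamma(m+n)
\qquad (p-m>0,\ q-n<0,\ p,q,m,n>0),
\]
by a Chebyshev-type positive-correlation argument: for a $B(m,n)$ random variable $X$, the functions $x\mapsto x^{p-m}$ and $x\mapsto(1-x)^{q-n}$ are both increasing, so $\E[f(X)g(X)]>\E f(X)\,\E g(X)$, and rewriting these moments as Beta integrals yields the display. A suitable choice of $p,q,m,n$ then reduces to $\Gamma(\alpha)\Gamma(\eta)>\Gamma(\alpha\beta)$.

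Your route is shorter and more transparent for the specific statement: once one sees that both sides are $f(\eta)$ and $f(1)$ for $f(t)=\Gamma(t+c)/\Gamma(t)$ with $c=\alpha-1$, the claim is just the strict monotonicity of $f$, which follows from $\psi'>0$ (or, equivalently, from strict log-convexity of $\Gamma$). The paper's detour buys a slightly more general Gamma inequality of independent interest, at the cost of introducing auxiliary parameters and a probabilistic lemma; your argument trades that generality for a two-line proof. Your handling of the endpoint $\eta=0$ via the convention $\Gamma(0)=+\infty$ is consistent with the paper's usage.
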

\begin{proof}
Let $p,q,m,n>0$ with $(p-m)>0$ and $(q-n)<0$; then we claim that
\begin{equation}\label{strange}
\Gamma(p+n) \, \Gamma(q+m) > \Gamma(p+q) \, \Gamma(m+n).
\end{equation}
Let us first show that this implies the claim of the lemma. By the constraint on $\beta$, we can find $n$ such that $\alpha(1-\beta)<n<1$; since $\alpha>1$ we can set 
\[
p:=\alpha-n>0, \quad q:=n-\alpha(1-\beta)>0, \quad m:=1-n>0,
\]
and we obtain 
\[
p+n=\alpha, \quad q+m=1-\alpha+\alpha\beta=\eta, \quad p+q=\alpha\beta, \quad m+n=1.
\] 
Moreover $p-m=\alpha-1>0$ and $q-n=-\alpha(1-\beta)<0$, so that
the desired result is obtained.\\
Let us now prove \eqref{strange}. 
Define the maps  $f , g :\, (0,1)\to [0,+\infty)$  by
\[
f(x):=x^{p-m}, \qquad g(x):=(1 - x)^{q-n}.%, \quad h(x)= x^{m?1}(1 ? x)^{n?1}.
\]	
Let us consider a $B(m,n)$-variable $X$, with density
\[
\frac{\Gamma(m+n)}{\Gamma(m)\Gamma(n)} \, x^{m-1}(1 - x)^{n-1} \, \un{\{x\in\,(0,1)\}} \, dx.
\]
As $(p-m)>0$ and $(q-n)<0$, the maps $f$ and $g$ are monotone increasing on $[0, 1]$. Therefore, considering $(X,Y)$ i.i.d. we obtain
\[
0< \E( (f(X)-f(Y))(g(X)-g(Y))) = 2(\E(f(X)g(X)) - \E(f(X)) \, \E(g(X)))
\]
i.e.
\[
\E(f(X)g(X))> \E(f(X)) \, \E(g(X)).
\]
But this corresponds to
\[
\frac{\Gamma(m+n)}{\Gamma(m)\Gamma(n)} \, \frac{\Gamma(p)\Gamma(q)} {\Gamma(p+q)}
> \frac{\Gamma(m+n)}{\Gamma(m)\Gamma(n)} \, \frac{\Gamma(p)\Gamma(n)} {\Gamma(p+n)} \, \frac{\Gamma(m+n)}{\Gamma(m)\Gamma(n)} \, \frac{\Gamma(m)\Gamma(q)} {\Gamma(m+q)}
\]
and after cancellation we get \eqref{strange}. 
\end{proof}

With the preparation finished, our first theorem can be proved.
\begin{proof}[Proof of Theorem \ref{T0}]
	In Lemma \ref{Lem1} we proved that solutions to the SDE (\ref{2b}) absorbed at zero form a pssMps $(\P^x)_{x>0}$. The corresponding L\'evy process has been characterized in Lemma \ref{Lem2}. Combining the Equivalence (\ref{5}) with part i) of Corollary \ref{Lem4} 
	the claim follows.
\end{proof}

In fact, we calculated in Corollary \ref{Lem4} more than we needed for the proof of Theorem \ref{T0} since part ii) was not used. The equivalence will be used later in Section 4.

\section{Solutions after $T_0$ and the Proof of Theorem \ref{T1}}
	The proof of Theorem \ref{T1} is based on the simple power transformation $z\mapsto z^{1-\eta}$ which turns the H\"older continuous drift into a constant drift.
	
	\begin{lemma}\label{hinher}
			Suppose $\theta\geq 0$,  $\alpha\in (1,2), \beta\in(1-1/\alpha,1)$ and suppose that $(Z_t)_{t\geq 0}$ is a non-negative (strong) solution to the SDE (\ref{2b}) started at $Z_0>0$. Then $(Z^{1-\eta}_t)_{t\geq 0}$ is a non-negative (strong) solution to
			\begin{align}\label{3}\begin{split}
				V_t&= Z_0^{1-\eta} + (1-\eta)\left(\theta\,-\frac{\Gamma(\alpha\beta)}{\Gamma(\eta)}\right)\int_0^t  \un{\{V_s\neq 0\}}\,ds\\
				&\quad+ \int_0^t \int_{0}^\infty \left( \left(V_{s-}^{\frac{1}{(1-\eta)}}+V_{s-}^{\frac{\beta}{1-\eta}}\, x\right)^{1-\eta} - V_{s-}\right) (\mathcal{N-N'})(ds,dx),\quad t\geq 0,\end{split}
			\end{align}
			where $\mathcal N$ is a Poisson point process on $(0,\infty)\times (0,\infty)$ with intensity measure $\mathcal N'(ds,dx)=ds\otimes c_\alpha x^{-1-\alpha}$.
	\end{lemma}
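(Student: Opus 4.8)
The plan is to read off (\ref{3}) from It\^o's formula applied to the concave power $\phi(z):=z^{1-\eta}$. Under the standing hypotheses $\alpha\in(1,2)$, $\beta\in(1-1/\alpha,1)$ we have $1-\eta=\alpha(1-\beta)\in(0,1)$, so that $\phi\in C^2((0,\infty))$ with $\phi'(z)=(1-\eta)z^{-\eta}$, but $\phi'(0+)=+\infty$. First I would rewrite (\ref{2b}) in Poissonian form, using the L\'evy--It\^o representation $L_t=\int_0^t\int_0^\infty x\,(\mathcal N-\mathcal N')(ds,dx)$ already exploited in Lemma \ref{Lem2}:
\[
Z_t=Z_0+\theta\int_0^t Z_s^{\eta}\,ds+\int_0^t\int_0^\infty Z_{s-}^{\beta}\,x\,(\mathcal N-\mathcal N')(ds,dx).
\]
Since $L$ is purely discontinuous, $Z$ has no continuous martingale part and no second-order term will appear. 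To cope with the singularity of $\phi$ at $0$ I would mollify: for $\eps>0$ choose $\phi_\eps\in C^2([0,\infty))$ with $\phi_\eps=\phi$ on $[\eps,\infty)$ and $\sup|\phi_\eps'|\le C\eps^{-\eta}$, $\sup|\phi_\eps''|\le C\eps^{-1-\eta}$ (a second-order Taylor interpolation on $[0,\eps]$ works), apply the It\^o formula for semimartingales (page~44 of \cite{IW}) to $\phi_\eps(Z_t)$, and then let $\eps\downarrow0$.

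For fixed $\eps$, and using that the jumps $\Delta Z_s=Z_{s-}^{\beta}\Delta L_s$ are nonnegative, It\^o's formula gives
\[
\phi_\eps(Z_t)=\phi_\eps(Z_0)+\theta\int_0^t\phi_\eps'(Z_s)Z_s^{\eta}\,ds+M^\eps_t+\int_0^t\!\!\int_0^\infty\!\big(\phi_\eps(Z_{s-}+Z_{s-}^{\beta}x)-\phi_\eps(Z_{s-})-\phi_\eps'(Z_{s-})Z_{s-}^{\beta}x\big)c_\alpha x^{-1-\alpha}\,dx\,ds,
\]
with $M^\eps_t:=\int_0^t\int_0^\infty\big(\phi_\eps(Z_{s-}+Z_{s-}^{\beta}x)-\phi_\eps(Z_{s-})\big)(\mathcal N-\mathcal N')(ds,dx)$. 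On $\{Z_{s-}>\eps\}$ every integrand agrees with the one built from $\phi$, because $Z_{s-}+Z_{s-}^{\beta}x\ge Z_{s-}>\eps$; on $\{0<Z_{s-}\le\eps\}$, the elementary identities $\phi'(z)z^{\eta}=1-\eta$ and $\phi'(z)z^{\beta}=(1-\eta)z^{(\alpha-1)(1-\beta)}$ (note $(\alpha-1)(1-\beta)>0$), combined with the $\eps$-bounds on $\phi_\eps$ and the arithmetic relations between $\alpha,\beta,\eta$, give a bound of order $1$, uniform in $z\in(0,\eps]$ and in $\eps$, for $\phi_\eps'(z)z^\eta$ and for the inner $dx$-integral. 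Since moreover $\phi_\eps(Z_\cdot)\to V:=Z_\cdot^{1-\eta}$ uniformly ($|\phi_\eps-\phi|\le C\eps^{1-\eta}$), while $\un{\{0<Z_s\le\eps\}}\to0$ for a.e.\ $s$ and $M^\eps$ converges to the compensated jump integral of (\ref{3}) (by dominated convergence for compensated Poisson integrals), dominated convergence lets me pass to the limit, leaving the drift of $V$ equal to $\big(\theta(1-\eta)+c\big)\int_0^t\un{\{Z_s\neq0\}}\,ds$, where $c:=\int_0^\infty\big((1+u)^{1-\eta}-1-(1-\eta)u\big)c_\alpha u^{-1-\alpha}\,du$.

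The decisive point is that $c$ is a constant. For $z>0$, the substitution $x=z^{1-\beta}u$ sends $z+z^{\beta}x$ to $z(1+u)$ and $z^{\beta}x$ to $zu$, and produces the Jacobian factor $z^{-\alpha(1-\beta)}$; because $(1-\eta)-\alpha(1-\beta)=0$, all powers of $z$ cancel and the inner $dx$-integral above equals $c$ for every $z>0$. This $z$-free integral is precisely the one evaluated in the proof of Corollary~\ref{C} with $\lambda=1-\eta\in(0,1)$, so
\[
c=-(1-\eta)\,\frac{\Gamma(\alpha-(1-\eta))}{\Gamma(1-(1-\eta))}=-(1-\eta)\,\frac{\Gamma(\alpha\beta)}{\Gamma(\eta)},
\]
using $\alpha-(1-\eta)=\alpha\beta$. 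Substituting this value, and rewriting $Z_{s-}=V_{s-}^{1/(1-\eta)}$, $Z_{s-}^{\beta}=V_{s-}^{\beta/(1-\eta)}$ and $\un{\{Z_s\neq0\}}=\un{\{V_s\neq0\}}$, turns the identity obtained into exactly (\ref{3}). For the ``(strong)'' statement: $\mathcal N$ is the jump measure of $L$, hence adapted to the augmented filtration of $L$, so if $Z$ is a strong solution of (\ref{2b}) then $V=Z^{1-\eta}$ is adapted to that filtration and is a strong solution of (\ref{3}). I expect the only genuine work to be the mollification step — the uniform estimates and dominated-convergence argument that let one discard the contribution of $\{Z_s\le\eps\}$ — while the algebraic heart of the matter is already contained in Corollary~\ref{C}.
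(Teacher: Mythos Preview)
Your approach coincides with the paper's: It\^o's formula with a regularization of $z\mapsto z^{1-\eta}$, followed by the integral identity of Corollary~\ref{C} at $\lambda=1-\eta$ to identify the constant. The paper regularizes by the shift $F_\eps(z)=(z+\eps)^{1-\eta}$ rather than by truncation at height $\eps$; this is slightly cleaner because the compensator term can then be evaluated \emph{exactly} for every $\eps$ via the substitution $y=xZ_s^\beta/(Z_s+\eps)$, yielding $-(1-\eta)\tfrac{\Gamma(\alpha\beta)}{\Gamma(\eta)}\int_0^t\big(\tfrac{Z_s}{Z_s+\eps}\big)^{\alpha\beta}ds$, to which ordinary dominated convergence applies. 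Your truncation would instead require a separate treatment of the mixed regime $Z_{s-}\le\eps<Z_{s-}+Z_{s-}^\beta x$.

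There is, however, a genuine gap in your handling of the martingale term $M^\eps$. ``Dominated convergence for compensated Poisson integrals'' presumably invokes the $L^2$ isometry, but the dominating integrand need not be square-integrable against $\mathcal N'$: for large $x$ one has $\big((z+z^\beta x)^{1-\eta}-z^{1-\eta}\big)^2\sim z^{2\beta(1-\eta)}x^{2(1-\eta)}$, and $\int_1^\infty x^{2(1-\eta)-1-\alpha}\,dx$ diverges whenever $2\alpha(1-\beta)\ge\alpha$, i.e.\ whenever $\beta\le 1/2$, which the hypotheses allow (take $\alpha$ close to $2$). The paper resolves this by working in $L^p$ for some $p\in\big(\alpha,\,(1-\beta)^{-1}\wedge 2\big)$ --- such $p$ exists precisely because $\beta>1-1/\alpha$ --- applying the Burkholder--Davis--Gundy inequality to the compensated integral, and then controlling $\int_0^t\E\big[Z_s^{(p-1)(1-\eta)}\big]\,ds$ via a Gronwall estimate on $\E[Z_{t\wedge\tau_m}]$ drawn from (\ref{2b}). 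This $L^p$ step, together with the passage to an a.s.\ convergent subsequence, is the substantive analytic work of the lemma, and it sits in the martingale term rather than in the drift you flagged.
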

		
		\begin{proof}
		 Let us first rewrite the SDE (\ref{2b}) via the L\'evy-It\={o} representation in the form
		\begin{align*}
			Z_t&=Z_0+\theta\int_0^t Z_s^{\eta}\,ds+\int_0^t\int_0^\infty Z_{s-}^{\beta} x\, (\mathcal{N-N'})(ds,dx),
		\end{align*}
		where $\mathcal N$ is the jump-measure of $L$ which has intensity $\mathcal N'(ds,dx)=ds\otimes c_\alpha x^{-1-\alpha}\,dx$. We cannot directly apply  It\={o}'s formula with $F(z)=z^{1-\eta}$ since $F$ is not smooth at the boundary of $[0,\infty)$ and cannot be extended to a concave function on $\R$. To surround this difficulty let us define $F_\eps(z)=(z+\eps)^{1-\eta}$, which is smooth for
		 $z\in [0,\infty)$, and 
		\[
		\begin{split}
			G(z,x,\eps)=F_\eps(z  +z^{\beta}\, x)-F_\eps(z)-F'_\eps(z)z^{\beta} x, \qquad z,x,\eps\geq 0.
			%=(\eps+z  +z^{1/\alpha}\, x)^{\alpha-1}-(\eps+z)^{\alpha-1}-(\alpha-1)(\eps+z)^{\alpha-1}z^{1/\alpha} x.
		\end{split}
		\]
		It\=o's formula then yields the almost sure identity
		\begin{align*}
		\begin{split}
			&\quad (Z_t+\eps)^{1-\eta} 
			- (Z_0+\eps)^{1-\eta} \\&= \theta\,(1-\eta)\int_0^t (Z_s+\eps)^{-\eta}Z_s^{\eta}\,ds\\
			&\quad+ \int_0^t \int_{0}^\infty \left( (Z_{s-} +\eps +Z_{s-}^{\beta}\, x)^{1-\eta} -( Z_{s-}+\eps)^{1-\eta} \right) (\mathcal{N-N'})(ds,dx)\\
			 &\quad + \int_0^t \int_{0}^\infty 
			 %\left( (Z_{s} +\eps +Z_{s}^{1/\alpha}\, x)^{\alpha-1} - (Z_{s}+\eps)^{\alpha-1}- 
			%(\alpha-1)\, (Z_{s}+\eps)^{\alpha-2}Z_s^{1/\alpha}\, x \, \right) ds \, 
			G(Z_s,x,\eps) \, 
			c_\alpha \, x^{-1-\alpha}\, dx\\
			&=: I^\eps_t+II^\eps_t+III^\eps_t.
			\end{split}
		\end{align*}
		In order to finish the proof we let $\eps$ tend to zero and show that the summands converge almost surely along a subsequence. It follows readily from dominated convergence that
		\begin{align*}
			\lim_{\eps\to 0} I_t^\eps=\theta(1-\eta)\int_0^t  \un{\{Z_s>0\}}\,ds.
		\end{align*}
Next, for $III^\eps_t$ we make the change of the variables $y=x\frac{Z_s^{\beta}}{Z_s+\eps}$ to get
\begin{align*}
III^\eps_t&= c_\alpha \int_0^t \left(\frac{Z_s}{Z_s+\eps}\right)^{\alpha\beta}\,ds\int_{0}^\infty 
  \left( (1+y)^{1-\eta} - 1- (1-\eta)y \right)y^{-1-\alpha}dy.
\end{align*}
Using (\ref{eq:7_1})-(\ref{eq:7_3}) with $\lambda=1-\eta$, one obtains 
\begin{align*}
III^\eps_t&= -(1-\eta)\frac{\Gamma(\alpha\beta)}{\Gamma(\eta)}\int_0^t \left(\frac{Z_s}{Z_s+\eps}\right)^{\alpha\beta}\,ds.
\end{align*}
Now, we can apply the dominated convergence theorem to obtain almost surely
		\begin{align*}
		\lim_{\eps\downarrow 0} III_t^\eps=  -(1-\eta)\frac{\Gamma(\alpha\beta)}{\Gamma(\eta)}\int_0^t 
              \un{\{Z_s>0\}}\,ds.
		\end{align*}
		 Next, we have to deal with the term $II_t^\eps$ for which we first show $L^p$-convergence for some $p\in(\alpha,2).$ Let us abbreviate
		\begin{align*}
			H(z,x,\eps)&:=F_\eps(z  +z^{\beta}\, x)-F_\eps(z) \geq 0
		\end{align*}
		satisfying
		\begin{align*}
			\frac{d}{d\eps}H(z,x,\eps)&=F'_\eps(z  +z^{\beta}\, x)-F'_\eps(z)\leq 0.
		\end{align*}
        Since $1-\eta=\alpha(1-\beta)<1$, we can fix $p\in (\alpha, \frac{1}{1-\beta}\wedge 2).$ 
		 Applying
                        Burkholder-Davis-Gundy inequality  (see e.g. \cite[Theorem VII.92]{DM83})
     we obtain
			\begin{equation}\begin{split}\label{K}
			&\quad\E\left[\left(\int_0^t \int_{0}^\infty \left( (Z_{s-}  +Z_{s-}^{\beta}\, x)^{1-\eta} -Z_{s-}^{1-\eta} \right) (\mathcal{N-N'})(ds,dx)\right.\right.\\
			&\quad\quad-\left.\left.\int_0^t \int_{0}^\infty \left( (Z_{s-} +\eps +Z_{s-}^{\beta}\, x)^{1-\eta} -( Z_{s-}+\eps)^{1-\eta} \right) (\mathcal{N-N'})(ds,dx)\right)^p\right]\\
			&\leq c_p \E\Bigg[\left(\int_0^t \int_{0}^\infty \big(H(Z_{s-},x,0)-H(Z_{s-},x,\eps)\big)^2
\mathcal{N}(ds,dx)\right)^{p/2}
%ds \,c_\alpha\, x^{-1-\alpha}dx
\Bigg]
\\
&\leq c_p
\E\Bigg[\int_0^t \int_{0}^\infty \big(H(Z_s,x,0)-H(Z_s,x,\eps)\big)^p
\mathcal{N}(ds,dx)\Bigg]
\\
&= c_p
\E\Bigg[\int_0^t \int_{0}^\infty \big(H(Z_s,x,0)-H(Z_s,x,\eps)\big)^p
\,c_\alpha\, x^{-1-\alpha}dx\,ds
\Bigg]
\end{split}
		\end{equation}
where $c_p>0$ is a constant coming from the Burkholder-Davis-Gundy inequality.
	%	if the right-hand side is finite. In order to verify the finiteness we use that
Since  $H$ is positive and pointwise decreasing in $\eps$, to show that the right-hand side of~(\ref{K})
  converges to zero, by monotone convergence theorem, it is enough to show the boundedness of 
		\begin{equation}\begin{split}
\label{eq:7_5}
\E\Bigg[\int_0^t \int_{0}^\infty H(Z_s,x,0)^p
\,c_\alpha\, x^{-1-\alpha}dx\,ds
\Bigg].
\end{split}
		\end{equation}
To this end, make 
 the change of variable  $x=Z^{1-\beta}_s y$ (note that the integrand is zero whenever $Z_s=0$) to obtain
		\begin{align*}
			&\E\Bigg[\int_0^t \int_{0}^\infty H(Z_s,x,0)^pds \,c_\alpha\, x^{-1-\alpha}\,dx\Bigg]\\
			&=\E\Bigg[\int_0^t \int_{0}^\infty Z_s^{(p-1)(1-\eta)}(\left(1 +y\right)^{1-\eta} -1)^p\,ds \,c_\alpha\, y^{-1-\alpha}dy\Bigg].
		\end{align*}
		To bound the right-hand side we use two bounds for the integrand. First, applying the H\"older property, gives
		\begin{align*}
			((1+y)^{1-\eta} -1^{1-\eta} )^p\leq y^{p(1-\eta)}
		\end{align*}
		and, secondly, we use the mean-value theorem with some $\zeta>0$ to obtain
		\begin{align*}
			((1+y)^{1-\eta} -1^{1-\eta})^p
			= (1-\eta)^p((1+\zeta)^{-\eta}y )^p
			\leq (1-\eta)^py^p.
		\end{align*}
		Plugging-in and using Fubini's theorem, we derive the upper bound
		\begin{align}\label{st}\begin{split}
			&\quad\E\Bigg[\int_0^t \int_{0}^\infty H(Z_s,x,0)^p\,ds \,c_\alpha\, x^{-1-\alpha}dx\Bigg]\\
			&\leq c_p \int_0^t\E\left[Z_s^{(p-1)(1-\eta)}\right]\,ds\, \int_{0}^\infty \min(y^p,y^{p(1-\eta)})\,c_\alpha\, y^{-1-\alpha}\,dy.\end{split}
		\end{align}
For the latter integral we estimate
\begin{align}
\nonumber
\int_{0}^\infty \min(y^p,y^{p(1-\eta)})\,c_\alpha\, y^{-1-\alpha}\,dy&\leq 
c_\alpha\int_{0}^1 y^{p-1-\alpha}\,dy+ c_\alpha\int_{1}^\infty  y^{p(1-\eta)-1-\alpha}\,dy\\
\nonumber
 &= c_\alpha\int_{0}^1 y^{p-1-\alpha}\,dy+ c_\alpha\int_{1}^\infty  y^{p\alpha(1-\beta)-1-\alpha}\,dy
\\
\label{eq:7_4}
\end{align}
which is finite since $p\in(\alpha, (1-\beta)^{-1}\wedge 2)$. 
		Defining $\tau_m=\inf\{t\geq 0: Z_t>m\}$, which tends to infinity almost surely by Lemma 2.3 of Fu and Li \cite{FL}, the defining equation (\ref{2b}) yields
		\begin{align*}
		\begin{split}
			\E[Z_{t\wedge \tau_m}] & =Z_0+\E\left[\theta \int_0^{t\wedge \tau_m} Z_s^{\eta}\,ds\right]\leq Z_0+ \theta \int_0^{t}(\E[Z_{s\wedge \tau_m}]+1)\,ds \\ & \leq  Z_0+ \theta \int_0^t\E[Z_{s\wedge \tau_m}]\,ds+\theta t.
		\end{split}
		\end{align*}
		Hence, by the Gronwall inequality and Fatou's lemma,
		\begin{align*}
		\begin{split}
			\E\big[Z_{t}^{(p-1)(1-\eta)}\big] & \leq \lim_{m\to \infty}\E\big[Z_{t\wedge \tau_m}^{(p-1)(1-\eta)}\big]\leq 1+ \lim_{m\to \infty}\E[Z_{t\wedge \tau_m}] \\ & \leq 1+\theta t+\theta^2\int_0^t s e^{\theta (t-s)}\,ds
		\end{split}
		\end{align*}
		so that  $ \int_0^t\E\big[Z_s^{(p-1)(1-\eta)}\big]\,ds<\infty$. This together with (\ref{eq:7_4})  implies that the right-hand side of (\ref{st}) is finite.\\ Now that the finiteness 
of~(\ref{eq:7_5})  is verified, (\ref{K}) and monotone convergence prove the 			convergence
		\begin{align*}
			II_t^\eps \stackrel{\eps\to 0}{\longrightarrow} \int_0^t \int_{0}^\infty \left( (Z_{s-}  +Z_{s-}^{\beta}\, x)^{1-\eta} -Z_{s-}^{1-\eta} \right) (\mathcal{N-N'})(ds,dx)
		\end{align*}
		in $L^p$.	Hence, there is a subsequence $\eps_k$ along which almost surely
		\begin{align*}
			\lim_{\eps_k\to 0}II_t^{\eps_k}=\int_0^t \int_{0}^\infty  \left( (Z_{s-}  +Z_{s-}^{\beta}\, x)^{1-\eta} -Z_{s-}^{1-\eta} \right) (\mathcal{N-N'})(ds,dx).
		\end{align*}
		Finally, along $\eps_k$ all summands $I_t^{\eps_k}$, $II_t^{\eps_k}$, $III_t^{\eps_k}$ converge almost surely so that we proved the semimartingale decomposition
			\begin{align*}
		\begin{split}
			&Z_t^{1-\eta} = Z_0^{1-\eta} + 
(1-\eta)\left(\theta-\frac{\Gamma(\alpha\beta)}{\Gamma(\eta)}\right)\int_0^t 
              \un{\{Z_s>0\}}\,ds
			\\ & + \int_0^t \int_{0}^\infty \left( (Z_{s-}+Z_{s-}^{\beta}\, x)^{1-\eta} - Z_{s-}^{1-\eta} \right) (\mathcal{N-N'})(ds,dx),
			\end{split}
		\end{align*}
so that, replacing $Z$ by $V^{1/(1-\eta)}$, the claim follows. 
		\end{proof}
		Here is the reverse power transformation with a small but crucial difference in the drift.
		
		\begin{lemma}\label{hinher2}
			Suppose $\theta\geq 0$,  $\alpha\in (1,2), \beta\in(1-1/\alpha,1)$, and suppose there exists a non-negative (strong) solution $(V_t)_{t\geq 0}$ to
			\begin{align}\label{3b}\begin{split}
				V_t&= V_0 +(1-\eta)\left(\theta-\frac{\Gamma(\alpha\beta)}{\Gamma(\eta)}\right)t\\
				&\quad+ \int_0^t \int_{0}^\infty \left( \left(V_{s-}^{\frac{1}{1-\eta}}+V_{s-}^{\frac{\beta}{1-\eta}}\, x\right)^{1-\eta} - V_{s-}\right) (\mathcal{N-N'})(ds,dx),\quad t\geq 0,\end{split}
			\end{align}
			started at $V_0>0$.
			Then $Z:=V^{\frac{1}{1-\eta}}$ is a non-negative (strong) solution of the SDE (\ref{2b}) with initial condition $V_0^{\frac{1}{1-\eta}}$.
	\end{lemma}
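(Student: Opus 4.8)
The plan is to reverse the power transformation of Lemma~\ref{hinher}: I would apply It\^o's formula to $Z:=V^{1/(1-\eta)}$, i.e.\ push a non-negative solution $V$ of~(\ref{3b}) through $F(v):=v^{1/(1-\eta)}$. Since $\beta\in(1-1/\alpha,1)$ forces $\eta=1-\alpha(1-\beta)\in(0,1)$, we have $1/(1-\eta)>1$, so $F$ is $C^\infty$ on $(0,\infty)$ but only $C^1$ at the origin, with $F(0)=F'(0)=0$; exactly as in Lemma~\ref{hinher} I would regularise by $F_\eps(v):=(v+\eps)^{1/(1-\eta)}$, which is smooth on $[0,\infty)$, apply It\^o's formula to $F_\eps(V_t)$ using the L\'evy--It\^o form~(\ref{3b}), and let $\eps\downarrow 0$ at the end. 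Writing $\Delta(v,x):=(v^{1/(1-\eta)}+v^{\beta/(1-\eta)}x)^{1-\eta}-v$ for the jump kernel of~(\ref{3b}), the algebraic identity that makes everything work is $F(v+\Delta(v,x))=v^{1/(1-\eta)}+v^{\beta/(1-\eta)}x$, so that with $Z_{s-}=V_{s-}^{1/(1-\eta)}$ one has $F(V_{s-}+\Delta(V_{s-},x))=Z_{s-}+Z_{s-}^{\beta}x$ --- which is precisely why the exponents in~(\ref{3b}) were chosen as they are. Since $V$ has no continuous martingale part, It\^o's formula gives $F_\eps(V_t)=F_\eps(V_0)+I^\eps_t+II^\eps_t+III^\eps_t$, with $I^\eps_t=(1-\eta)\bigl(\theta-\Gamma(\alpha\beta)/\Gamma(\eta)\bigr)\int_0^t F_\eps'(V_s)\,ds$, with $II^\eps_t$ the compensated integral of $F_\eps(V_{s-}+\Delta)-F_\eps(V_{s-})$ against $(\mathcal N-\mathcal N')$, and with $III^\eps_t$ the $\mathcal N'$-integral of $F_\eps(V_{s-}+\Delta)-F_\eps(V_{s-})-F_\eps'(V_{s-})\Delta$.

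The bookkeeping of the three limits as $\eps\downarrow0$ is parallel to Lemma~\ref{hinher}, except that now the $\Gamma(\alpha\beta)/\Gamma(\eta)$-terms enter with opposite signs and cancel. First, $F_\eps'(V_s)=\tfrac1{1-\eta}(V_s+\eps)^{\eta/(1-\eta)}\downarrow\tfrac1{1-\eta}Z_s^{\eta}$, which already vanishes on $\{Z_s=0\}$, so $I^\eps_t\to\bigl(\theta-\Gamma(\alpha\beta)/\Gamma(\eta)\bigr)\int_0^t Z_s^{\eta}\,ds$ by dominated convergence; in particular the fact that~(\ref{3b}) carries the full drift $t$ rather than the occupation time $\int_0^t\un{\{V_s\neq0\}}\,ds$ is immaterial for this step. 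Second, the $III^\eps$-integrand vanishes where $V_s=0$, and on $\{V_s>0\}$ the substitution $x=Z_s^{1-\beta}y$ reduces the inner integral, via~(\ref{eq:7_1})-(\ref{eq:7_3}) with $\lambda=1-\eta$, to $\tfrac{\Gamma(\alpha\beta)}{\Gamma(\eta)}Z_s^{\eta}$, so $III^\eps_t\to\tfrac{\Gamma(\alpha\beta)}{\Gamma(\eta)}\int_0^t Z_s^{\eta}\,ds$. Third, the $II^\eps$-integrand converges to $Z_{s-}+Z_{s-}^{\beta}x-Z_{s-}=Z_{s-}^{\beta}x$, so $II^\eps_t\to\int_0^t\int_0^\infty Z_{s-}^{\beta}x\,(\mathcal N-\mathcal N')(ds,dx)=\int_0^t Z_{s-}^{\beta}\,dL_s$ by the L\'evy--It\^o representation of $L$. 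Adding the three, the $\Gamma(\alpha\beta)/\Gamma(\eta)$ contributions of $I^\eps$ and $III^\eps$ cancel, leaving $Z_t=Z_0+\theta\int_0^t Z_s^{\eta}\,ds+\int_0^t Z_{s-}^{\beta}\,dL_s$, i.e.\ $Z$ solves~(\ref{2b}) with initial value $V_0^{1/(1-\eta)}$; the strong property transfers because $Z$ is a fixed measurable functional of $V$.

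The step I expect to be the main obstacle is the $\eps\downarrow0$ limit of the compensated jump integral $II^\eps_t$. In Lemma~\ref{hinher} the corresponding integrand had \emph{sublinear} growth in $x$, which allowed a single $L^p$-estimate with $p\in(\alpha,(1-\beta)^{-1}\wedge2)$; here $H_\eps(V_{s-},x):=F_\eps(V_{s-}+\Delta(V_{s-},x))-F_\eps(V_{s-})$ tends to $Z_{s-}^{\beta}x$ and hence grows \emph{linearly} in $x$, so no single such $p$ works. I would therefore split the $x$-domain at $1$ and localise at $\tau_m:=\inf\{t:Z_t>m\}$ (which increases to $\infty$ a.s.\ since $Z$ is c\`adl\`ag). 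On $(1,\infty)$ the Poisson measure has finite activity, so the $\mathcal N$-part is a finite random sum converging pathwise by monotone convergence --- one checks $H_\eps\ge0$ and $H_\eps$ increasing in $\eps$ (using that $\Delta(v,x)\ge0$ and $u\mapsto u^{\eta/(1-\eta)}$ is increasing), hence $H_\eps\downarrow H_0$ --- while its $\mathcal N'$-compensator converges by dominated convergence because $H_\eps=O(x)$ and $\int_1^\infty x\cdot x^{-1-\alpha}\,dx<\infty$ (here $\alpha>1$ is used). On $(0,1]$ one has $H_\eps(V_{s-},x)\le C(m)\,x$ for $x\le1$ on $[0,\tau_m]$ (the constant is finite because $\beta>\eta$, which holds since $\alpha>1$), so the Burkholder--Davis--Gundy inequality with exponent $2$ gives $L^2$-convergence of the $(0,1]$-part, since $\int_0^1 x^2\cdot x^{-1-\alpha}\,dx<\infty$ (here $\alpha<2$ is used). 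Passing to a subsequence along which all terms converge almost surely and then letting $m\to\infty$ completes the argument; the limits of $I^\eps$ and $III^\eps$ are then routine dominated-convergence arguments, with bounded dominating functions available on each $[0,\tau_m]$.
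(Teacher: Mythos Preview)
Your argument is correct, but it takes a substantially longer route than the paper's. The key observation you miss is that $F(v)=v^{1/(1-\eta)}$ is \emph{convex} on $[0,\infty)$ (since $1/(1-\eta)>1$), in contrast to the concave $z\mapsto z^{1-\eta}$ of Lemma~\ref{hinher}. The paper therefore dispenses with regularisation entirely and applies the Meyer--It\^o formula (Protter, Theorem~51) directly to the convex $F$; since $V$ has no continuous martingale part, the local-time term vanishes and one obtains in one step
\[
Z_t=Z_0+\Bigl(\theta-\tfrac{\Gamma(\alpha\beta)}{\Gamma(\eta)}\Bigr)\!\int_0^t Z_s^{\eta}\,ds+\int_0^t\!\int_0^\infty Z_{s-}^{\beta}x\,(\mathcal{N}-\mathcal{N}')(ds,dx)
-\tfrac1{1-\eta}\int_0^t Z_s^{\eta}\!\int_0^\infty G(Z_s,x,0)\,c_\alpha x^{-1-\alpha}\,dx\,ds,
\]
where $G$ is exactly the function already treated in Lemma~\ref{hinher}, and the inner integral equals $-(1-\eta)\Gamma(\alpha\beta)/\Gamma(\eta)$ by the computation there. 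No $\eps$-limit, no splitting at $x=1$, no localisation is needed.

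Your regularisation scheme does work: the monotonicity $H_\eps\downarrow H_0$ you state is correct (from $\partial_\eps H_\eps=F_\eps'(v+\Delta)-F_\eps'(v)\ge0$), your bound $\Delta(v,x)\le(1-\eta)v^{(\beta-\eta)/(1-\eta)}x$ with $\beta-\eta=(\alpha-1)(1-\beta)>0$ gives the claimed $H_\eps\le C(m)x$ on $[0,\tau_m]$, and the $(0,1]/(1,\infty)$ split handles the linear growth of $H_0$ that prevents a single-$p$ BDG estimate. So your proof is valid; it simply does by hand what Meyer--It\^o for convex functions packages for free. The trade-off is that your approach avoids invoking the Meyer--It\^o machinery, at the cost of a page of $\eps\to0$ justification that the paper sidesteps in two lines.
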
	
		\begin{proof}
		Applying the Meyer-It\^o formula (see Theorem 51 of Protter \cite{P}) with the convex function $F(v)=z^{1/(1-\eta)}$, we obtain
		\begin{align}
\nonumber
			 Z_t&=V_t^{\frac{1}{1-\eta}}\\
\nonumber
			&=V_0^{\frac{1}{1-\eta}}+\left(\theta-\frac{\Gamma(\alpha\beta)}{\Gamma(\eta)}\right)\int_0^t V_s^{\frac{\eta}{1-\eta}}\,ds+\int_0^t\int_0^\infty V_{s-}^{\frac{\beta}{1-\eta}}x\,(\mathcal{N-N'})(ds,dx)\\
\nonumber
			&\quad + \int_0^t\int_0^\infty \left[V_{s}^{\frac{\beta}{1-\eta}}x- \frac{1}{1-\eta}V_{s}^{\frac{\eta}{1-\eta}}\left(\left(V_{s}^{\frac{1}{1-\eta}}+V_{s}^{\frac{\beta}{1-\eta}}\, x\right)^{1-\eta} - 			V_{s}\right)\right] \mathcal{N}'(ds,dx)\\
\label{eq8_1}
			&=Z_0+\left(\theta-\frac{\Gamma(\alpha\beta)}{\Gamma(\eta)}\right)\int_0^t Z_s^{\eta}\,ds+\int_0^t\int_0^\infty Z_{s-}^{\beta}x\,(\mathcal{N-N'})(ds,dx)\\
\nonumber
			&\quad -\frac{1}{1-\eta} \int_0^tZ_s^{\eta}\int_0^\infty \left[\left(Z_{s}+Z_{s}^{\beta}\, x\right)^{1-\eta}-Z_{s}^{1-\eta}-(1-\eta)Z_s^{\beta-\eta}x \right]
 c_\alpha x^{-1-\alpha}\,dx\,ds.
		\end{align}
		With the same integral identity used, in the proof of 
  Lemma~\ref{hinher} for analyzing $III^\eps,$ we get 
  that the inner integral in the last term on the right-hand side  equals to
\begin{align*}
\int_0^\infty G(Z_s,x,0)
 c_\alpha x^{-1-\alpha}\,dx=-(1-\eta)\frac{\Gamma(\alpha\beta)}{\Gamma(\eta)}.
\end{align*}
Substituting this into~(\ref{eq8_1}), 
we finally obtain 
		\begin{align*}
			Z_t&= Z_0+\theta\int_0^t Z_s^{\eta}\,ds+\int_0^t\int_0^\infty Z_{s-}^{\beta}x\,(\mathcal{N-N'})(ds,dx).
		\end{align*}	
		\end{proof}
		Before coming to the consequences of the power transformation we need existence and uniqueness for solutions of the jump type SDE (\ref{3b}). %The key point here is that the  condition (\ref{7}) for $\xi$ corresponds precisely to positive constant drift in (\ref{3b}).
		
	\begin{lemma}\label{exuniq}
		Suppose $V_0\geq 0$ and $\alpha\in (1,2), \beta\in(1-1/\alpha,1)$, then the two statements are equivalent:
		\begin{itemize}
		\item[i)] $\theta>\frac{\Gamma(\alpha\beta)}{\Gamma(\eta)}$,
		\item[ii)] there is a unique solution $V\in \mathcal S$ of the SDE (\ref{3b}) which is moreover strong.
		\end{itemize}
	\end{lemma}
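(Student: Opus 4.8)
The plan is to prove the two implications separately; the whole point of the power transformation is that it turns the non-Lipschitz drift $\theta z^{\eta}$ of \eqref{2b}, whose companion ODE $\dot z=\theta z^{\eta}$ has the non-unique null solution, into the \emph{constant} drift of \eqref{3b}, whose companion ODE $\dot v=\kappa$ is trivially well posed. Throughout I write $\kappa:=(1-\eta)\bigl(\theta-\tfrac{\Gamma(\alpha\beta)}{\Gamma(\eta)}\bigr)$, so that (i) is precisely ``$\kappa>0$'', and $g(v,x):=\bigl(v^{1/(1-\eta)}+v^{\beta/(1-\eta)}x\bigr)^{1-\eta}-v$ for the jump coefficient in \eqref{3b}, so that \eqref{3b} reads $V_t=V_0+\kappa t+\int_0^t\int_0^\infty g(V_{s-},x)\,(\mathcal N-\mathcal N')(ds,dx)$. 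I will first record three facts, valid for every sign of $\kappa$: (a) raising to the power $1/(1-\eta)>0$ shows $g(v,x)\ge0$, that $v\mapsto v+g(v,x)$ is non-decreasing for every $x\ge0$, and that $g(0,x)=0$ --- so at the state $0$ only the drift can move $V$; (b) on $(0,\infty)$ the coefficients of \eqref{3b} are locally Lipschitz and of at most linear growth (one checks $\int_0^\infty\bigl(g(v,x)^2\wedge g(v,x)\bigr)c_\alpha x^{-1-\alpha}\,dx\le C(1+v)$, using that $g(v,x)$ is of order $v^{1-1/\alpha}x$ for small $x$, of order $v^{\beta}x^{1-\eta}$ for large $x$, and that $\beta,\,2-2/\alpha\in(0,1)$), whence for $V_0>0$ there is a unique strong solution up to $\sigma_0:=\inf\{t:V_t=0\}$, with no explosion; (c) by Lemma~\ref{hinher2}, if $V$ solves \eqref{3b} then $V^{1/(1-\eta)}$ solves \eqref{2b} with the \emph{same} parameter $\theta$.

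To prove (ii)$\Rightarrow$(i) I argue by contraposition. Assume $\kappa\le0$ and suppose some $V\in\mathcal S$ solves \eqref{3b}; I seek a contradiction. If $V_0=0$ and $\kappa=0$, then $V$ is a non-negative local martingale issued from $0$, hence $V\equiv0\notin\mathcal S$; so assume $V_0>0$ (the remaining case $V_0=0,\ \kappa<0$ is covered by the same computation). If $\kappa<0$: localising the compensated integral by $\tau_m:=\inf\{t:V_t>m\}$ --- a true martingale up to $\tau_m$ by (b) --- gives $\E[V_{t\wedge\tau_m}]=V_0+\kappa\,\E[t\wedge\tau_m]$; letting $m\to\infty$ (no explosion) and using Fatou yields $0\le\E[V_t]\le V_0+\kappa t<0$ for $t$ large, a contradiction. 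If $\kappa=0$: then $V$ is a non-negative local martingale, and by (c) and Lemma~\ref{eta} the process $Z:=V^{1/(1-\eta)}$ solves \eqref{2b} with parameter $\Gamma(\alpha\beta)/\Gamma(\eta)<\Gamma(\alpha)$, so $\sigma_0<\infty$ a.s.\ by Theorem~\ref{T0}; restarting \eqref{3b} at the stopping time $\sigma_0$ produces a non-negative local martingale issued from $0$, which is identically $0$, so $V\equiv0$ on $[\sigma_0,\infty)$ and $\int_0^\infty\un{\{V_s=0\}}\,ds=\infty$, contradicting $V\in\mathcal S$.

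To prove (i)$\Rightarrow$(ii), assume $\kappa>0$. I first show that \emph{every} non-negative solution of \eqref{3b} lies in $\mathcal S$: by (a), at any time $s_0$ with $V_{s_0}=0$ only the strictly positive constant drift acts, so $V$ is pushed above $0$ at once and stays positive on a right neighbourhood of $s_0$; hence each zero of $V$ is right-isolated, $\{s:V_s=0\}$ is at most countable, and $\int_0^\infty\un{\{V_s=0\}}\,ds=0$. In particular ``uniqueness in $\mathcal S$'' is ordinary pathwise uniqueness among non-negative solutions. For \emph{uniqueness}: two solutions $V^1,V^2$ with $V^1_0=V^2_0$ agree by (b) until the first time either hits $0$, so it suffices to treat $V^1_0=V^2_0=0$; there, the drift being a constant (hence Lipschitz) and $v\mapsto v+g(v,x)$ non-decreasing by (a), I would invoke the comparison principle for non-negative jump SDEs driven by stable noise (see \cite{FL}) --- available precisely because the power transform has flattened the drift --- and compare $V^1$ with $V^2$ in both directions to conclude $V^1=V^2$; equivalently, since the drift difference vanishes, $V^1-V^2$ is a driftless purely discontinuous local martingale to which the Yamada--Watanabe estimates of \cite{LM,FL} apply, the constant drift then contributing no error term. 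For \emph{existence} of a non-negative strong solution for every $V_0\ge0$: it should follow from the general theory of non-negative jump SDEs (continuous coefficients, linear growth by (b), $v+g(v,x)\ge0$, and drift $\kappa\ge0$ at $0$, so that $[0,\infty)$ is invariant); alternatively one builds it by solving \eqref{3b} from $x>0$ up to $\sigma_0$, letting $x\downarrow0$ --- the a priori identity $\E[V^x_t]=x+\kappa t$ forcing the limit from $0$ to leave $0$ rather than collapse --- and concatenating excursions. By the first part of this paragraph, that solution automatically lies in $\mathcal S$.

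The whole difficulty is concentrated at the point $0$; away from it everything is classical, and the bookkeeping reduces to the growth estimate in (b) and to the elementary inequality $\Gamma(\alpha\beta)/\Gamma(\eta)<\Gamma(\alpha)$ of Lemma~\ref{eta}. The steps I expect to be the main obstacle are: verifying that the comparison principle --- equivalently, the Yamada--Watanabe estimate --- genuinely closes for \eqref{3b} through the visits to $0$, where $g$ is only H\"older in the state variable, of exponent $1-1/\alpha$ (it is here that the hypothesis $\beta>1-1/\alpha$ enters); justifying the restart of \eqref{3b} at $\sigma_0$ in the case $\kappa=0$; and, in the hands-on existence construction, controlling the accumulation of excursion endpoints and checking that the limiting process spends no time at $0$ and is adapted to the driving noise.
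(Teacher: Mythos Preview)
Your contrapositive for (ii)$\Rightarrow$(i) is correct. For $\kappa<0$ it matches the paper verbatim. For $\kappa=0$ you take a slightly different route (a non-negative local martingale issued from $0$ is a supermartingale started at $0$, hence identically $0$), whereas the paper first establishes pathwise uniqueness and then observes that, the drift now being zero, the stopped process $\bar V:=V_{\cdot\wedge\sigma_0}$ is a second solution of \eqref{3b} distinct from $V$, contradicting uniqueness. Both arguments work; yours avoids reproving uniqueness in that borderline case.

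In the direction (i)$\Rightarrow$(ii) there are two genuine gaps. First, the Li--Mytnik/Fu--Li machinery you plan to invoke does not apply until you have established the global H\"older bound
\[
|g(v_2,z)-g(v_1,z)|\le c\,|v_2-v_1|^{1-1/\alpha}\,z,\qquad v_1,v_2,z\ge0,
\]
and this is the real technical content of the lemma. The paper obtains it by writing $\partial_v g(v,z)=f(z v^{-1/\alpha})$, checking $f(0)=0$ and $f'>0$ (hence $g$ itself is non-decreasing in $v$, which is strictly stronger than your observation that $v\mapsto v+g(v,x)$ is non-decreasing), bounding $0\le f(x)\le(1-\eta)x$, and then splitting into the cases $|v_2-v_1|\le\tfrac12 v_2$ and $|v_2-v_1|\ge\tfrac12 v_2$. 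Only with this estimate in hand does the Yamada--Watanabe computation of \cite{LM} close (the paper carries out the calculation with $h=|v_2-v_1|^{1/\alpha}\log k$ explicitly). Your comparison-principle alternative would need monotonicity of $g$ (not merely of $v+g$), and the comparison theorems in \cite{FL} require the same H\"older input anyway.

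Second, your argument that every solution lies in $\mathcal S$ is heuristic: from $g(0,\cdot)=0$ you can indeed conclude that $V$ cannot sit at $0$ on an interval, but that does not make the zeros right-isolated---the noise acts the instant $V$ becomes positive and could in principle drive $V$ back to $0$ immediately, so you have not excluded a zero set of positive Lebesgue measure. The paper's device here is short and fully rigorous: having constructed the unique strong solution $V$ of \eqref{3b}, apply Lemma~\ref{hinher2} so that $Z:=V^{1/(1-\eta)}$ solves \eqref{2b}, then apply Lemma~\ref{hinher} so that $Z^{1-\eta}=V$ also solves \eqref{3}, whose drift is $\kappa\int_0^t\un{\{V_s\ne0\}}\,ds$. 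Equating with the drift $\kappa t$ in \eqref{3b} gives $\int_0^t\un{\{V_s=0\}}\,ds=0$ directly. This round-trip through the power transform is exactly what should replace your right-isolation argument.
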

		\begin{proof}
		\textbf{Step 1:} For the first part of the proof we assume $\theta>\frac{\Gamma(\alpha\beta)}{\Gamma(\eta)}$ and prove existence of a unique non-negative strong solution which we then show is of type $\mathcal S$.
		To ease notation, let us define
		\begin{align*}
			g(v,z)=v(1+v^{-1/\alpha}z)^{1-\eta}-v\quad \text{and}\quad c=(1-\eta)\left(\theta\,-\frac{\Gamma(\alpha\beta)}{\Gamma(\eta)}\right)
		\end{align*}
		so that the  (\ref{3b}) 
%(\ref{3})
 becomes 
		\begin{align*}
			V_t=V_0+ ct+\int_0^t\int_0^\infty g(V_{s-},x)(\mathcal{N-N'})(ds,dx).
		\end{align*}
		In the following we aim at applying the techniques developed in Li and Mytnik \cite{LM} though their Theorem 2.2 cannot be applied directly. Let us start with some estimates for $g$. Taking the derivative with respect to $v$ yields
		\begin{align*}
			{\partial g \over \partial v}(v,z)&=-\frac{1-\eta}{\alpha}\frac{z}{v^{1/\alpha}}\left( 1+ \frac{z}{v^{1/\alpha}}\right)^{-\eta} + \left(1+\frac{z}{v^{1/\alpha}}\right)^{1-\eta}-1\\
			&=-(1-\beta)x\left( 1+ x\right)^{-\eta} + (1+x)^{1-\eta}-1\\
			&=: f(x),
		\end{align*}
		where $x=\frac{z}{v^{1/\alpha}}$. It follows directly that $f(0)=0$ and furthermore
		\begin{align*}
			f'(x)&= (1-\beta)\eta x\left( 1+ x\right)^{-\eta-1} -
  (1-\beta)\left( 1+ x\right)^{-\eta}+ (1-\eta)(1+x)^{-\eta}\\
			&=(1-\beta)\left( 1+ x\right)^{-\eta-1}\left( (\eta x -( 1+ x) + \frac{1-\eta}{1-\beta}(1+x)\right)\\
			&= (1-\beta)\left( 1+ x\right)^{-\eta-1}(\alpha\beta x+\alpha-1)\\
			& >0
		\end{align*}
		for all $x>0$ whereas   the last equality follows as we recall the  definition of 
$\eta=1-\alpha(1-\beta)$, and the last inequality follows from the assumption  that $\alpha>1$. This implies that $f(x)$ is positive for all $x>0$, and hence $g(v,z)$ is increasing in $v$ for all $z$.  With this preparation we can find a modulus of continuity for $g$. Using the bound
		\begin{eqnarray}\label{eq1}
			(1+x)^{1-\eta}\leq 1+(1-\eta)x,\;x\geq 0,
		\end{eqnarray}
		 shows that
		 \begin{eqnarray*}
			0\leq f(x)\leq 1+(1-\eta)x -1 = (1-\eta)x. 
		\end{eqnarray*}
		Now assume without loss of generality that $v_1\leq v_2$. To estimate $|g(v_2,z)-g(v_1,z)|$ we consider two cases:

		\textbf{Case 1.} We first assume that $|v_2-v_1|\leq \frac{1}{2}v_2$. The previous calculations and the mean-value theorem yield (recall that $x= \frac{z}{v_1^{1/\alpha}}$),
		\begin{eqnarray*}
			|g(v_2,z)-g(v_1,z)| \leq  (1-\eta)x (v_2-v_1)  \leq  (1-\eta)\frac{z}{v_1^{1/\alpha}} (v_2-v_1),
		\end{eqnarray*}
		which combined with 
		\begin{eqnarray*} 
			v_1 \geq \frac{1}{2} v_2 \geq v_2-v_1
		\end{eqnarray*}
		gives the estimate
		\begin{align*}
			|g(v_2,z)-g(v_1,z)|   \leq  (1-\eta) z(v_2-v_1)^{1-1/\alpha}. 
		\end{align*}

		\textbf{Case 2}. Next we assume $|v_2-v_1|\geq \frac{1}{2}v_2$. In this case we will use the following (crude) bound (recall that again $v_2\geq v_1$ and $g(v,z)$ is increasing in $v$):
		\begin{align*}
			|g(v_2,z)-g(v_1,z)| 
			   \leq g(v_2,z)
			 &\leq v_2(1+  (1-\eta) v_2^{-1/\alpha} z -1)
			 \\ &=  (1-\eta)v_2^{1-1/\alpha}z
\\
&\leq c |v_2-v_1|^{1-1/\alpha} z, 
		\end{align*}
		where the second inequality follows from (\ref{eq1}) and the last inequality follows from the assumption of Case 2.
		\smallskip
		
		In total we obtain the following uniform modulus of continuity for $g$:
		 \begin{eqnarray}\label{eq2}
			|g(v_2,z)-g(v_1,z)|  \leq c |v_2-v_1|^{1-1/\alpha} z.
		\end{eqnarray}

		We are now in a position to prove existence for all $t\geq 0$ and pathwise uniqueness for (\ref{3}).
		\smallskip
		
		\textbf{Pathwise Uniqueness:}	The claim follows from Proposition 3.1 of Li and Mytnik \cite{LM} for which conditions i)-iii) are trivially matched and for 		condition iv) we apply their Lemma 3.2. To match with their notation, chose $\rho(v)=\rho_m(v)=v^{-1/2}, v\geq 0$, and $p=p_m=1-1/\alpha$. Then, by~(\ref{eq2}) the 
		condition (2c) from that paper is satisfied with $\rho$ and $p$ as above and with $f_m(z)=z$. Then by Lemma~3.2 from there we get that for any $h>0$ (note that $\mu_0(dz)= c_{\alpha}z^{-\alpha-1}dz$ there):
		\begin{align*}
			& \int_0^{\infty}D_{l_0(v_2,v_1,z)} \phi_k(v_2-v_1) c_{\alpha} z^{-\alpha-1}dz\\
			&\leq c_{\alpha} k^{-1} |v_2-v_1|^{2-2/\alpha -1}1_{|v_2-v_1|\leq a_{k-1}}\int_0^h z^{2-\alpha-1}\,dz
			 \\ &  + c_{\alpha} |v_2-v_1|^{1-1/\alpha}1_{|v_2-v_1|\leq a_{k-1}}\int_h^{\infty} z^{1-\alpha-1}\,dz\\\
			&=  c_{\alpha} k^{-1} |v_2-v_1|^{1-2/\alpha}1_{|v_2-v_1|\leq a_{k-1}} h^{2-\alpha}+ c_{\alpha} |v_2-v_1|^{1-1/\alpha}1_{|v_2-v_1|\leq a_{k-1}} h^{1-\alpha}.
		\end{align*}
		Now take $b_k = \ln(k)$ and $h=|v_2-v_1|^{1/\alpha} b_k$ and we get, 
		\begin{align*}
			& \int_0^{\infty}D_{l_0(v_2,v_1,z)} \phi_k(v_2-v_1) c_{\alpha} z^{-\alpha-1}dz\\
			&\leq c_{\alpha} k^{-1} |v_2-v_1|^{1-2/\alpha +2/\alpha-1} \ln(k)^{2-\alpha}  \un{|v_2-v_1|\leq a_{k-1}} 
			\\ &  + c_{\alpha} |v_2-v_1|^{1-1/\alpha+1/\alpha-1} \ln(k)^{1-\alpha}  \un{|v_2-v_1|\leq a_{k-1}}\\
			&\leq  c_{\alpha}  \un{|v_2-v_1|\leq a_{k-1}}(k^{-1} \ln(k)^{2-\alpha} +\ln(k)^{1-\alpha})
		\end{align*}
		which tends to zero as $k\rightarrow \infty$. Hence, condition iv) is satisfied and pathwise uniqueness follows from Proposition 3.1 of Li and Mytnik \cite{LM}.
		\smallskip
		
		\textbf{Strong Existence:} With the pathwise uniqueness in hands, strong solutions can now be constructed as in Section 5 of Fu and Li \cite{FL}; we only sketch the arguments. The condition $\theta>\frac{\Gamma(\alpha\beta)}{\Gamma(\eta)}$ enters here crucially since it assures a positive constant drift which pushes 		 solutions up whenever they hit zero. \\ First, one has to consider the truncated equations
		\begin{align*}
			V_t&=V_0+ ct+\int_0^t\int_\eps^mg(V_{s-},x)\wedge m\,(\mathcal{N-N'})(ds,dx),
		\end{align*}
		which have solutions $V^{\eps,m}$ due to Theorem 4.4 of Fu and Li \cite{FL}. It follows readily from Aldous' criterion that the sequence $V^{\eps,m}$ is tight for any $m$ fixed. Using the generators one can then verify weak convergence to a solution of 
		\begin{align*}
			V_t&=V_0+ ct+\int_0^t\int_0^mg(V_{s-},x)\wedge m\,(\mathcal{N-N'})(ds,dx).
		\end{align*}
		The pathwise uniqueness proof given above applies equally for this truncated version so that any subsequences $V^{\eps_k,m}$ converge to the unique strong solution $V^m$. The pathwise uniqueness then allows to get rid of the truncation $m$ 		as in the proof of Proposition 2.4 of Fu and Li \cite{FL}.
		\smallskip
		
		\textbf{Type $\mathcal S$:} Suppose $V$ is the unique strong solution of the SDE (\ref{3b}) constructed above. Then, by Lemma \ref{hinher2}, $Z:=V^{1/(1-\eta)}$ solves (\ref{2b}) and $Z^{1-\eta}$ solves the SDE (\ref{3}). Since $V=(V^{1/(1-\eta)})^{\eta-1}$ this shows 		that $V$ satisfies  (\ref{3}) and (\ref{3b}) so that equalizing both yields almost surely
		\begin{align*}
			&Z_0^{1-\eta} + (1-\eta)\left(\theta\,-\frac{\Gamma(\alpha\beta)}{\Gamma(\eta)}\right)\int_0^t  \un{\{V_s\neq 0\}}\,ds\\
			&\quad+ \int_0^t \int_{0}^\infty \left( \left(V_{s-}^{\frac{1}{1-\eta}}+V_{s-}^{\frac{\beta}{1-\eta}}\, x\right)^{1-\eta} - V_{s-}\right) (\mathcal{N-N'})(ds,dx)\\
			&=Z_0^{1-\eta} + (1-\eta)\left(\theta\,-\frac{\Gamma(\alpha\beta)}{\Gamma(\eta)}\right)t\\
			&\quad+ \int_0^t \int_{0}^\infty \left( \left(V_{s-}^{\frac{1}{1-\eta}}+V_{s-}^{\frac{\beta}{1-\eta}}\, x\right)^{1-\eta} - V_{s-}\right) (\mathcal{N-N'})(ds,dx).
		\end{align*}
		Hence, $\int_0^t  \un{\{Z_s\neq 0\}}\,ds=t$ almost surely which proves that $Z$ (and hence $V$) is of type $\mathcal S$.
		
		\textbf{Step 2:} Let us assume that $\theta\leq \frac{\Gamma(\alpha\beta)}{\Gamma(\eta)}$ and suppose there is a solution $V$ of type $\mathcal S$. By the power transformation of Lemma \ref{hinher2} the sequence of stopping times $\tau_m=\inf\big\{t\geq 0:V^{1/(1-\eta)}_t>m\big\}$ converges almost surely to infinity as argued in the proof of Lemma \ref{hinher}. In the case $\theta< \frac{\Gamma(\alpha\beta)}{\Gamma(\eta)}$, non-negativity combined with monotone convergence and Fatou's lemma implies that
%		and suppose there were a solution $V$ of type $\mathcal S$ to (\ref{3b}). Since $V$ is of type $\mathcal S$ we can use the power transformation in both directions and apply Lemma \ref{Lem1} combined with Corollary \ref{Lem4} and Equivalence (\ref{5}) to see that $V$ hits zero in finite time almost surely. Applying the strong Markov property to the almost surely finite stopping time $T_0$ we may assume without loss of generality that $V_0=0$. Further, we define the sequence of stopping times $\tau_m=\inf\big\{t\geq 0:Z_t=V^{1/(1-\eta)}_t>m\big\}$ which almost surely converges to infinity as argued in the proof of Lemma \ref{hinher}. By Fatou's lemma and monotone convergence we then obtain		
		\begin{align*}
				0&\leq\E[V_t] \leq \lim_{m\to\infty}\E[V_{t\wedge \tau_m}]\\
				&=V_0+ (1-\eta)\left(\theta\,-\frac{\Gamma(\alpha\beta)}{\Gamma(\eta)}\right)\lim_{m\to\infty}\E[t\wedge \tau_m]\\
				&= V_0+(1-\eta)\left(\theta\,-\frac{\Gamma(\alpha\beta)}{\Gamma(\eta)}\right)t.
		\end{align*}
		Hence, choosing $t$ large enough, we arrive at a contradiction since the right-hand side is negative. We conclude that there cannot be a solution of type $\mathcal S$.\\ 
		Next, suppose that $\theta=\frac{\Gamma(\alpha\beta)}{\Gamma(\eta)}$ which is strictly smaller than $\Gamma(\alpha)$ so that solutions almost surely hit zero in finite time. The proof of pathwise uniqueness given in Step 1 also applies in the case $\theta=\frac{\Gamma(\alpha\beta)}{\Gamma(\eta)}$. Now suppose there is a solution $V\in \mathcal S$. Consequently, since the constant drift is zero, also the stopped solution $(\bar V_t)_{t\geq 0}:=(V_{t\wedge T_0})_{t\geq 0}$ is a solution to (\ref{3b}). Since it is clearly different from $V$ as $\bar V\notin \mathcal S$, we found a contradiction to the pathwise uniqueness.
	\end{proof}
	We are now prepared to prove Theorem \ref{T1}.
	\begin{proof}[Proof of Theorem \ref{T1}]
		A) Let $(V_t)_{t\geq 0}$ the unique strong solution to (\ref{3b}) constructed in Lemma \ref{exuniq}. Then Lemma \ref{hinher2} shows that $Z=V^{\frac{1}{1-\eta}}$ is a strong solution to the SDE (\ref{2b}) which by Lemma \ref{exuniq} is of type $\mathcal S$. Now we suppose there are two solutions $Z^1, Z^2$ with $Z^1_0=Z^2_0$ that spend a Lebesgue null set of time at zero. Then the power transformation is reversible due to Lemmas \ref{hinher} and \ref{hinher2} so that the uniqueness follows from Lemma \ref{exuniq}.\\% In Lemma \ref{Lem1} we showed that (\ref{2b}) induces a pssMp so that  Corollary \ref{Lem4} combined with Equivalence (\ref{5}) shows that $T_0=\infty$ almost surely.\\		
		B) If there was a solution of class $\mathcal S$ to the SDE (\ref{2b}), taking the power $1-\eta$, there would be a solution of class $\mathcal S$ of the SDE (\ref{3b}). But this gives a contradiction to Lemma \ref{exuniq}.		
	\end{proof}

\section{Self-Similar Extensions }\label{S: ss ext}
	Lamperti's transformation for pssMps, which we recalled in Section 2, can not be used directly to characterize pssMps after hitting zero (or started from zero) since the infinite time-horizon $(0,\infty)$ for $\xi$ is compressed via the time-change to the possibly finite time-horizon $(0,T_0)$ so that 		the entire information on $\xi$ is already used until $T_0$. This drawback was resolved in recent years.
	\smallskip
	
	If $\xi$ drifts to $-\infty$, Rivero \cite{R2} and Fitzsimmons \cite{F} independently proved that a pssMp of index $\gamma$ has a recurrent self-similar Markovian extension of index $\gamma$ after $T_0$ with non-negative sample paths that leave zero continuously 
	 if and only if the following Cram\'er-type condition holds for the corresponding L\'evy process $\xi$:
	\begin{align}\label{7}
		\text{There is $0<a<\frac{1}{\gamma}$ such that }\E[e^{a \xi_1}]>1.
	\end{align}
%	If the Cram\'er-type condition fails, it was shown in Rivero \cite{R1} that self-similar extensions can be constructed by entering the positive half-line with a jump according to a power law and then restarting the process at the corresponding positive value (we  exclude \XXX{In fact do we exclude it ? Isn't it that it just doesn't solve our equation ?} this case here since it seems rather artificial in our context).
 Due to Corollary \ref{Lem4} ii), Condition (\ref{7}) is equivalent to $\theta>\frac{\Gamma(\alpha\beta)}{\Gamma(\eta)}$ which we found more directly to be the necessary and sufficient condition for the existence of non-trivial solutions to the SDE (\ref{2b}).
	
To further explore the connection between the present work and the results of Rivero and Fitzsimmons recall from Lemma \ref{Lem1} that solutions to the SDE (\ref{2b}) define a pssMp $(\P^x)_{x>0}$. If we define furthermore $(\bar \P^x)_{x\geq 0}$ via the  solutions $Z^x\in \mathcal S$ to the SDE (\ref{2b})  started at $x\geq 0$, then we can easily deduce the following consequence from the pathwise uniqueness:
	
	\begin{cor} Let $\beta\in [1-1/\alpha,1)$ and suppose $\theta>\frac{\Gamma(\alpha\beta)}{\Gamma(\eta)}$, then $(\bar \P^x)_{x\geq 0}$ is the unique extension of $(\P^x)_{x> 0}$ that leaves zero continuously.
		
	\end{cor}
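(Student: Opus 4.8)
The plan is to show that the family $(\bar\P^x)_{x\ge0}$ is a self-similar Markovian extension of $(\P^x)_{x>0}$ which leaves $0$ continuously, and then to obtain uniqueness from the Rivero--Fitzsimmons characterisation recalled above, whose hypothesis \eqref{7} is, under the standing assumption that $\xi$ drifts to $-\infty$ (equivalently $\theta<\Gamma(\alpha)$, Corollary \ref{Lem4} i)), equivalent to $\theta>\frac{\Gamma(\alpha\beta)}{\Gamma(\eta)}$ by Corollary \ref{Lem4} ii); in the complementary regime $\theta\ge\Gamma(\alpha)$ the process started from $x>0$ never reaches $0$, so the only point at issue is the behaviour from $x=0$, which is again pinned down by the arguments below.

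First I would make precise that $(\bar\P^x)_{x\ge0}$ is well defined: for $x>0$ the existence of a unique strong solution $Z^x\in\mathcal S$ is Theorem \ref{T1} A), while for $x=0$ it follows by applying Lemma \ref{exuniq} with $V_0=0$ (the hypothesis $\theta>\frac{\Gamma(\alpha\beta)}{\Gamma(\eta)}$ being all that is used there) together with Lemma \ref{hinher2}, which gives $Z^0:=(V^0)^{1/(1-\eta)}$ with $Z^0_0=0$. Pathwise uniqueness in $\mathcal S$ combined with the strong solution property yields the usual flow/cocycle identity: conditionally on $\mathcal G_s$, the shifted process $(Z^x_{s+t})_{t\ge0}$ is a strong $\mathcal S$-solution started from $Z^x_s$, hence has law $\bar\P^{\,Z^x_s}$; this gives the (strong) Markov property of $(\bar\P^x)_{x\ge0}$. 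Self-similarity of index $\gamma=1/(1-\eta)$ is inherited exactly as in the proof of Lemma \ref{Lem1}, since both \eqref{2b} and the constraint defining $\mathcal S$ are preserved under $Z\mapsto(cZ_{c^{-1/\gamma}t})_{t\ge0}$ once $L$ is replaced by the equidistributed $L^c$. That $(\bar\P^x)_{x\ge0}$ is an extension of $(\P^x)_{x>0}$ follows by stopping $Z^x$ at $T_0$ for $x>0$ and invoking uniqueness before $T_0$ (Lemma \ref{Lem1}). Finally, that it leaves $0$ continuously is read off \eqref{3}: writing $V=Z^{1-\eta}$ (Lemma \ref{hinher}), the jump integrand $(V_{s-}^{1/(1-\eta)}+V_{s-}^{\beta/(1-\eta)}x)^{1-\eta}-V_{s-}$ vanishes identically when $V_{s-}=0$, and the constant drift $(1-\eta)(\theta-\frac{\Gamma(\alpha\beta)}{\Gamma(\eta)})$ is strictly positive; hence $V$, and so $Z$, can only move off $0$ through the continuous drift, i.e.\ every excursion from $0$ starts continuously.

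With these points in place, uniqueness is the uniqueness part of the Rivero--Fitzsimmons theory: under \eqref{7} the recurrent self-similar Markovian extension of a pssMp that leaves $0$ continuously is unique, hence must coincide with $(\bar\P^x)_{x\ge0}$; see \cite{R2, F}. Alternatively one can argue directly from pathwise uniqueness: such an extension spends zero Lebesgue time at $0$, and reconstructing the driving stable process from the jumps of $Z$ along its excursions (using $\Delta Z_s=Z_{s-}^\beta\Delta L_s$ on $\{Z_{s-}>0\}$) together with the drift recovered from \eqref{3} exhibits it as an $\mathcal S$-solution of \eqref{2b}, which Theorem \ref{T1} and the $V_0=0$ case of Lemma \ref{exuniq} determine uniquely. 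I expect the main obstacle to lie precisely in this last step — either in locating and applying the exact uniqueness statement within the Rivero--Fitzsimmons framework, or, on the self-contained route, in making the reconstruction of the driving noise across the (infinitely many) excursions from $0$ rigorous and verifying that the resulting object is a genuine weak, and then strong, solution in $\mathcal S$; establishing the strong Markov property of the $\mathcal S$-solution flow up to and including the boundary point $x=0$ is the other place requiring care.
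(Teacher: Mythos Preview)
Your overall architecture matches the paper's exactly: show that $(\bar\P^x)_{x\ge0}$ is a self-similar extension of $(\P^x)_{x>0}$ which leaves $0$ continuously, and then read off uniqueness from the Rivero--Fitzsimmons theory via Corollary~\ref{Lem4} ii). The paper is briefer than you on Markovianity and on the case $x=0$, taking these as immediate from ``the pathwise uniqueness'' announced in the sentence introducing the corollary.

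The one genuine technical difference is in how self-similarity is established. You propose re-running the computation of Lemma~\ref{Lem1} for the SDE~\eqref{2b} itself, observing additionally that the scaling map preserves the class $\mathcal S$ so that uniqueness in $\mathcal S$ (Theorem~\ref{T1}~A) replaces the ``uniqueness before $T_0$'' used there. The paper instead passes to $V=Z^{1-\eta}$ and verifies directly that \eqref{3b} is invariant under $(V_t)\mapsto(cV_{t/c})$, replacing the Poisson random measure $\mathcal N$ by its image $\mathcal N_{(c)}$ under $(s,x)\mapsto(cs,c^{1/\alpha}x)$; since $\mathcal N'_{(c)}=\mathcal N'$ the rescaled measure is equal in law to $\mathcal N$, and genuine pathwise uniqueness for~\eqref{3b} (Lemma~\ref{exuniq}) yields self-similarity of $V$ with index~$1$, hence of $Z$ with index $1/(1-\eta)$. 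The paper's route avoids having to argue that the class $\mathcal S$ is stable under scaling, at the cost of the extra power transformation; your route is shorter once one accepts that observation. Both are valid.

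Your alternative uniqueness argument via reconstruction of the driving stable noise across excursions is not used by the paper and, as you correctly flag, would need real work to make rigorous; the paper simply relies on \cite{R2,F}.
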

	\begin{proof}
		It follows directly from the definition of $(\bar \P^x)_{x\geq 0}$ that it is an extension of $(\P^x)_{x>0}$ so that it suffices to prove the self-similarity for our solutions $Z^x\in \mathcal S$ to the SDE (\ref{2b}). Since, by construction, those are obtained by taking the power 
		$1-\eta$ of solutions to the SDE (\ref{3b}) it suffices to show that solutions to the SDE (\ref{3b}) are self-similar of index $1$.	
		 Setting $V^{c}_t := cV_{tc^{-1}}$ and
		plugging into the defining equation yields
		\begin{align*}
			&V^{c}_t  = cV_{tc^{-1}}= cV_0 + (1-\eta)\left(\theta\,-\frac{\Gamma(\alpha\beta)}{\Gamma(\eta)}\right)t\\
			&\quad+ \int_0^{t c^{-1}} \int_0^\infty \left( \left( \big(cV_{s-}\big)^{\frac{1}{1-\eta}}+\big(cV_{s-}\big)^{\frac{\beta}{1-\eta}}\, c^{\frac{1}{\alpha}} x\right)^{1-\eta} - cV_{s-}\right) (\mathcal{N-N'})(ds,dx)\\
			&= V_0^c + (1-\eta)\left(\theta\,-\frac{\Gamma(\alpha\beta)}{\Gamma(\eta)}\right)t\\
			&+ \int_0^{t } \int_0^\infty \left( \left( \big(V^{c}_{s-}\big)^{\frac{1}{1-\eta}}+\big(V^{c}_{s-}\big)^{\frac{\beta}{1-\eta}}\,  x\right)^{1-\eta} - V^{c}_{s-}\right) (\mathcal{N}_{(c)}-\mathcal{N}'_{(c)})(ds,dx),
		\end{align*}
		where $\mathcal{N}_{(c)}$ and $\mathcal{N}'_{(c)}$ are the image of $\mathcal{N}$, respectively $\mathcal{N}'$, under the map $(s,x)\mapsto(cs,c^{1/\alpha}x)$. Since $\mathcal{N}'_{(c)}=\mathcal{N}'$, $\mathcal{N}_{(c)}$ has the same law as $\mathcal{N}$, and we see that both $(V_t)_{t\geq 0}$ and 
		$(cV_{tc^-1})_{t\geq 0}$ are solutions to the same well-posed SDE so that they coincide in law. Solutions trivially leave zero continuously since the integrand is null at zero.		
	\end{proof}	
	If $\xi$ does not drift to $-\infty$, by (\ref{5}) almost surely the sample paths of the corresponding pssMp do not hit zero. The main question becomes whether the Markov family $(\P^x)_{x>0}$ can be extended continuously to $\P^0$.
	 Caballero and Chaumont \cite{CC} and later Chaumont et al. \cite{CKPR}
 proved that $\P^x$ converges weakly to a non-trivial limit law as $x$ tends to zero, i.e. it is a Feller process on $[0,\infty)$ and not on $(0,\infty)$ only, if and only if the overshoot process
 \begin{align*}
		\xi_{T_x}-x,\;x\geq 0,\quad\text{with }\quad T_x:=\inf\{t\geq 0:\xi_t\geq x\},
  \end{align*}
 converges, as $x\to\infty$, weakly towards the law of a finite random variable. A simpler construction of $\P^0$ has been given in Bertoin and Savov \cite{BS}
 via L\'evy processes indexed by the real line.\\ 
 In the case of the pssMps $(\P^x)_{x>0}$ corresponding to the SDE (\ref{2b}), the Feller property on $[0,\infty)$ is again a direct consequence of the uniqueness of Lemma \ref{exuniq}.
 \begin{cor} Let $\beta\in [1-1/\alpha,1)$ and
 	suppose that $\theta>\frac{\Gamma(\alpha\beta)}{\Gamma(\eta)}$, then $(\bar \P^x)_{x\geq 0}$ is weakly continuous in the initial condition.
 \end{cor}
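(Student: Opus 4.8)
The plan is to transfer the question, via the power transformation, to the weak continuity in the initial condition of the auxiliary SDE~\eqref{3b}, and then to prove that by the classical tightness-plus-uniqueness scheme. By construction $\bar\P^x$ is the law of $Z^x=\big(V^{x^{1-\eta}}\big)^{1/(1-\eta)}$, where $V^{v_0}$ denotes the unique strong solution of~\eqref{3b} started at $v_0\geq 0$ given by Lemma~\ref{exuniq}. Since $v\mapsto v^{1/(1-\eta)}$ is continuous on $[0,\infty)$, post-composition with it is continuous from the Skorohod space $D([0,\infty),[0,\infty))$ into itself, and $x\mapsto x^{1-\eta}$ is continuous; hence, by the continuous mapping theorem, it is enough to prove that $v_0\mapsto\mathrm{Law}(V^{v_0})$ is weakly continuous on $D([0,\infty),[0,\infty))$.

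First I would fix a sequence $v_n\to v_0$, which we may take bounded, say $\sup_n v_n\le K$, and establish tightness of $\{\mathrm{Law}(V^{v_n})\}_n$. The required estimates are essentially already available: the moment bound from the proof of Lemma~\ref{hinher}, applied to $(V^{v_n})^{1/(1-\eta)}=Z^{v_n^{1/(1-\eta)}}$, controls $\sup_n\E\big[(V^{v_n}_t)^{1/(1-\eta)}\big]$ on compact time intervals in terms of $K$ only; feeding this into~\eqref{3b} and using the modulus of continuity~\eqref{eq2} bounds the drift and the compensated jump integral uniformly in $n$, and Aldous' criterion --- already used in the strong-existence part of Lemma~\ref{exuniq} --- yields tightness on $D([0,\infty),[0,\infty))$.

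Next, for any subsequential weak limit $W$ of $V^{v_{n_k}}$, I would identify $W$ as a weak solution of~\eqref{3b} started at $v_0$. The cleanest route is through the associated martingale problem: for $F$ in a suitable class one checks that $F(V^{v_n}_t)-F(v_n)-\int_0^t\mathcal A F(V^{v_n}_s)\,ds$ is a martingale, where $\mathcal A$ is the generator of~\eqref{3b}, and passes to the limit along $n_k$ using continuity of $\mathcal AF$ together with the uniform moment bounds. Pathwise uniqueness for~\eqref{3b} (Lemma~\ref{exuniq}) and the Yamada--Watanabe argument then give weak uniqueness, so $\mathrm{Law}(W)=\mathrm{Law}(V^{v_0})$; as this holds for every subsequential limit, the full sequence $\mathrm{Law}(V^{v_n})$ converges weakly to $\mathrm{Law}(V^{v_0})$, which is the desired continuity.

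The step I expect to be the main obstacle is the identification of the subsequential limit: a weak limit of solutions of~\eqref{3b} does not come with a canonical driving Poisson measure, so one cannot simply pass to the limit inside the stochastic integral --- this is precisely why I would formulate the limit identification as a martingale problem rather than work directly with~\eqref{3b}. A quantitative stability bound $V^{v_n}\to V^{v_0}$ in probability, which would be the shortest proof, does not seem available, since $g(v,x)$ is only $(1-1/\alpha)$-H\"older in $v$ and the Yamada--Watanabe-type functions $\phi_k$ of~\cite{LM} that power the uniqueness proof do not directly produce a modulus of continuity in the initial datum. The remaining technical points --- that the discontinuity sets of $v\mapsto v^{1/(1-\eta)}$ and of the coordinate projections are $\mathrm{Law}(V^{v_0})$-null, so that the continuous mapping theorem and the convergence of finite-dimensional marginals genuinely apply --- are routine, $V^{v_0}$ having no fixed times of discontinuity.
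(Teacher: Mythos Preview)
Your proposal is correct and follows essentially the same route as the paper: reduce to weak continuity for the auxiliary SDE~\eqref{3b} via the power transformation, and then deduce it from the uniqueness of Lemma~\ref{exuniq}. The only difference is packaging: where you spell out the tightness-plus-martingale-problem-identification argument by hand, the paper simply invokes \cite[Theorem~IX.4.8]{JS}, which encapsulates exactly this ``uniqueness implies continuous dependence on the initial datum'' scheme.
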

\begin{proof}
	This follows directly from the uniqueness of Lemma \ref{exuniq} combined with  \cite[Theorem IX.4.8]{JS}.
\end{proof}
	
Our direct expression of the self-similar extension at zero is possible since the pssMp is given by a stochastic differential equation. In D\"oring and Barczy \cite{BD} this approach is extended by first reformulating Lamperti's transformation via jump type SDEs and then proceeding accordingly.

\end{document}